\newtheorem{theorem}{Theorem}[section]
\newtheorem{proposition}[theorem]{Proposition}
\numberwithin{equation}{section}
\newcommand{\CAAnewsection}[1]{\addtocounter{section}{1}\section*{#1}\setcounter{equation}{0}\setcounter{theorem}{0}}
\newcommand{\CAAnewsubsection}[1]{\subsection*{#1}\noindent}
\begin{document}

\noindent {{\it Communications in Applied Analysis} {\bf xx} (20xx), xxx--xxx}

\title[A model of a network community\hfill]{\mbox{}\\[1cm]A model of a network community \\
with a decentralized reputation-based peer control of information}

\author[\hfill A. Olifer]{Andrei Olifer}
\begin{abstract}
\vspace{-.3cm}

\begin{center}
School of Science and Technology, Georgia Gwinnett College\\ Lawrenceville, GA 30043, USA\\
{\em E-mail:} aolifer\symbol{64}ggc.edu\\[0.3cm]
\end{center}

\vspace{-.2cm}
\end{abstract}

\thanks{\hspace{-.5cm}\rm
	Received March 26, 2018
	\hfill 1083-2564 \$15.00 \copyright Dynamic Publishers, f.}
\maketitle
\thispagestyle{empty}
{\footnotesize
\noindent{\bf ABSTRACT.} 
This study was motivated by the problem of identifying fake documents on the Internet. To explore possible solutions to this problem we introduce a model of a network community in which members submit documents  with verifiable content. Documents are evaluated by three individuals, randomly selected from all the members. The members whose documents are evaluated as authentic gain reputation. Otherwise, they lose it. Members with higher reputation have a greater chance to be selected to serve as evaluators. Analytical and computational results suggest this evaluation mechanism is effective in a wide range of parameters including the presence of members who consistently submit fake content and do not evaluate documents of others objectively. The proposed mechanism can be used in a variety of situations including evaluating photo and video files with geolocation and time data and creating collections of digital evidence of events clear from fake materials.

\noindent{\bf AMS (MOS) Subject Classification.} 91B74. 
}

\CAAnewsection{1. Introduction}
\label{intro}
The Internet is swamped with unverified and fake information. Distribution of such documents is widely used in politics and commerce \cite{CZ13,B14,AG17}. Known approaches to verification of information have limited efficiency. Fact checking sites such as FactCheck.org and Snopes.com have limited capabilities to counter-balance the influx of fake materials. The throughput of these sites is constrained by available resources, particularly by the  number of people working for them. In addition, the credibility of such cites is questioned since closed groups of people, involved in verification, may develop bias \cite{NP13}. An attractive alternative could be a decentralized, distributed mechanism of verification. Here we define and explore the viability of a model of a network community of members with such a mechanism.

The proposed verification process is inspired by academia peer-reviewing. The following principles of academic reviewing have been borrowed: blind evaluation by several independent reviewers and the selection of reviewers according to their reputation. 
The reviewers are independent in a sense that they do not communicate with each other or anybody else. The reviewing is blind -- reviewers do not know the author of the reviewed document. The selection according to reputation means that the community members with a greater reputation have a greater probability of being selected. However, the reviewing process modeled here is simpler than academic peer-reviewing in a few respects: 1) evaluation results are binary -- documents are evaluated as either authentic or fake; 2) the evaluation process is not hierarchical -- each of three reviewers votes pro or contra and the decision is determined by a simple majority; 3) evaluators do not deal with rebuttals and resubmissions. 

Modeling of reputations is the core of the proposed model. There are many computational models of reputation and trust that vary in how individuals interact and use the results of interactions; see for example review  \cite{PS13}. Particular attention has been placed on peer-to-peer interactions in the context of on-line commerce as in eBay and Amazon \cite{XL03,CB13,KSH03,MH02}. The model considered in this study is different from these works. Most importantly, the model describes the time evolution of {\it{proportions}} of individuals with certain levels of reputation. It does not take into account the reputations of specific individuals. Similar mechanisms of reputation changes have been considered earlier \cite{KSH03}, but as far as we know, not in population models. The population model approach allows us to explore the behavior of large communities. In traditional agent-based simulations the size of model communities is limited.

The main question of the study was whether a community with the specified rules could function in such a way that the authenticity of most of the documents submitted by its members were evaluated correctly. Here, we started the exploration of the community's model assuming simple, if not the simplest, choices for the parameters that could be set by community developers. We show that for those choices, under some assumptions regarding members' behavior, the answer is affirmative.

The model and approach are introduced in Section 2. In that section, a simple case of a community without cliques and members with only three possible values of reputation is considered. Section 3 describes a generalization of the model to a case with an arbitrary number of reputation values. Results of computer simulations of the model with eleven possible values of reputation are presented in Section 4. Subsequent Sections 5 and 6 explore modeled communities with one and then two cliques respectively; in the latter case, the cliques are antagonistic to each other. The paper ends with a discussion (Section 7) and conclusions (Section 8).

Computer simulations were performed in Matlab (MathWorks, Natick, MA) using ode45 function with the absolute and relative tolerances equal to $10^{-7}$ on a PC with a 2.3 GHz processor and 16 Gb memory.     
 
\CAAnewsection{2. Communities with three levels of reputation}
\label{Communities with three levels of reputation}

\CAAnewsubsection{1. Notations and equations}
In this section, for simplicity of exposition, the reputation of the community members has only three values, $r_0$, $r_1$, $r_2$, where $0 \leq r_0<r_1<r_2 \leq 1$. 
The values of $r_k$ are sometimes expressed as percentages. For example, $r_k=1$ corresponds to 100\% reputation.
The proportion of the community members with the reputation $r_k$ is denoted by $R_k$, $0\leq R_k \leq 1$, $k=0,1,2$; $R_0+R_1+R_2=1$. Equivalently, $R_k$ is the probability of randomly selecting a member with the reputation $r_k$. Together, $R_k$ define a probability distribution. It is assumed that new members start with one and the same value of reputation. Appearance of new members and loss of existing members of the community can be modeled by changes in the corresponding values of $R_k$. For example, if there is an influx of new members with the reputation $r_1$ then the distribution of $R_k$ should change to reflect the increase of $R_1$. However, in this study we assume that the number of the community members is fixed.

After the document, submitted by a member, is evaluated, the member's reputation changes. It increases or remains maximal (equal to $r_2$) if the document is evaluated as authentic. Otherwise the reputation decreases or remains minimal (equal to $r_0$). All the members submit documents with the same rate. These assumptions lead to the equations

\begin{equation}
\label{case3}
\begin{split}
\frac{dR_{0}}{dt} & = \underbrace{-R_{0}e_0}_{\substack{increase\\reputation}}+\underbrace{R_{1}(1-e_1)}_{\substack{decrease\\reputation}}\\
\frac{dR_{1}}{dt} & = \underbrace{R_{0}e_0 -R_{1}e_1}_{\substack{increase\\reputation}} +  \underbrace{R_{2}(1-e_2) -R_{1}(1-e_1)}_{\substack{decrease\\reputation}}\\
\frac{dR_{2}}{dt} & = \underbrace{R_{1}e_1}_{\substack{increase\\reputation}}-\underbrace{R_{2}(1-e_2)}_{\substack{decrease\\reputation}},
\end{split}
\end{equation} 

\noindent
where $t$ is time, and $e_k$, $0\leq e_k \leq 1$, is the probability that a document, submitted by a member with the reputation $r_k$, is evaluated as authentic during a unit of time.

The following proposition shows that (\ref{case3}) indeed describes an evolution of a probability distribution $\{R_0, R_1, R_2\}$.

\begin{proposition}
	\label{propo_case3}
	Let the initial values $R_k(0)$ of $R_k$ be such that $R_0(0)+R_1(0)+R_2(0)=1$ and $0\leq R_k(0)\leq 1$. Then a) $R_0(t)+R_1(t)+R_2(t)\equiv 1$, and b) $0\leq R_k(t)\leq 1$  for all $t$.  
\end{proposition}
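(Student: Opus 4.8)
I would prove the two assertions in turn, obtaining (a) essentially for free and then using it to reduce (b) to a nonnegativity (invariance) statement.

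For part (a) I would simply add the three equations of \eqref{case3}. Each ``increase'' term appearing with a plus sign in one equation reappears with a minus sign in another, and likewise for the ``decrease'' terms; concretely the right-hand sides sum identically to
\[
-R_0e_0+R_1(1-e_1)+R_0e_0-R_1e_1+R_2(1-e_2)-R_1(1-e_1)+R_1e_1-R_2(1-e_2)=0 .
\]
Hence $\frac{d}{dt}\bigl(R_0+R_1+R_2\bigr)\equiv 0$, so $R_0(t)+R_1(t)+R_2(t)$ keeps its initial value $1$ on the whole interval of existence.

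For part (b), note first that, granting (a), it suffices to prove $R_k(t)\ge 0$ for every $k$, since then $R_k=1-\sum_{j\ne k}R_j\le 1$ automatically. To get nonnegativity I would rewrite each equation in ``gain--loss'' form $\frac{dR_k}{dt}=-a_kR_k+b_k$, with $a_0=e_0$, $a_1=e_1+(1-e_1)=1$, $a_2=1-e_2$ all $\ge 0$, and $b_0=(1-e_1)R_1$, $b_1=e_0R_0+(1-e_2)R_2$, $b_2=e_1R_1$, each a nonnegative linear combination of the \emph{other} components (using $0\le e_k\le 1$). Thus on each face $\{R_j=0\}$ of the nonnegative octant, with the remaining components $\ge 0$, one has $\frac{dR_j}{dt}=b_j\ge 0$: the vector field never points out of the octant. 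I would then conclude that the closed nonnegative octant is positively invariant, and combine this with (a) to confine the trajectory to the compact simplex $\{R_k\ge 0,\ \sum_kR_k=1\}$; compactness also rules out finite-time blow-up, so the bounds hold for \emph{all} $t$. To make the invariance step rigorous I would use a perturbation/first-exit argument: for $\varepsilon>0$ let $R^{\varepsilon}$ solve the system with $+\varepsilon$ added to each right-hand side and the same initial data; at a hypothetical first time $\tau$ at which some $R^{\varepsilon}_j$ vanishes (all others still $\ge 0$) one has $\dot R^{\varepsilon}_j(\tau)=b_j(\tau)+\varepsilon>0$, which contradicts $R^{\varepsilon}_j$ decreasing to $0$ there; hence $R^{\varepsilon}>0$ componentwise, and letting $\varepsilon\to0$ (continuous dependence on parameters) gives $R_k(t)\ge0$.

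The routine steps are part (a) and the algebraic rewriting; the one genuinely delicate point — and the step I would be most careful about — is the invariance of the boundary faces $\{R_j=0\}$ when $\dot R_j$ also happens to vanish there, the degenerate case that a naive first-exit argument overlooks, which is exactly why I favour the $\varepsilon$-perturbation (or, equivalently, a direct appeal to Nagumo's subtangent criterion, whose hypothesis here is precisely $\dot R_j=b_j\ge0$ on each face). As a remark, since here the $e_k$ are constants, the quickest route is to observe that \eqref{case3} is the linear system $\dot R=AR$ with
\[
A=\begin{pmatrix}-e_0 & 1-e_1 & 0\\ e_0 & -1 & 1-e_2\\ 0 & e_1 & -(1-e_2)\end{pmatrix},
\]
whose off-diagonal entries are nonnegative and whose columns sum to $0$; equivalently $A^{\mathsf T}$ is a Markov generator, so $e^{tA}$ is column-stochastic for all $t\ge0$, and $R(t)=e^{tA}R(0)$ then yields both (a) and (b) simultaneously.
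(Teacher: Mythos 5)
Your main argument is correct and follows essentially the same route as the paper: summing the three equations gives $\tfrac{d}{dt}(R_0+R_1+R_2)\equiv 0$ for (a), and (b) is reduced to showing the vector field does not point out of the nonnegative octant on the faces $\{R_j=0\}$, combined with $\sum_k R_k=1$ to get the upper bound. The paper's own proof of (b) is exactly the ``once $R_k$ reaches $0$ it cannot decrease'' statement that you correctly identify as the delicate point; your $\varepsilon$-perturbation (or Nagumo) argument is a genuine tightening of that step, and your observation that $R_k\le 1$ follows for free from nonnegativity plus (a) is cleaner than the paper's separate treatment of the case $R_0=1$. Note also that the paper's boundary check is stated only for $0<R_1,R_2<1$, whereas your face-by-face argument covers the corners of the simplex as well.

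One point to fix: the closing remark rests on a false premise. The $e_k$ are \emph{not} constants --- they depend on $(R_0,R_1,R_2)$ through \eqref{s_k}--\eqref{e_k}, and the paper explicitly calls \eqref{case3} a non-linear system. So $\dot R=AR$ with a fixed matrix $A$, and the conclusion $R(t)=e^{tA}R(0)$ with $e^{tA}$ column-stochastic, do not apply as stated. The structural observation is salvageable in a weaker form: along any given solution one may regard $A(t)=A\bigl(R(t)\bigr)$ as a time-dependent Metzler matrix with zero column sums (using only $0\le e_k\le 1$), and the fundamental solution of the resulting time-inhomogeneous linear system is still column-stochastic, which re-proves (a) and (b); but this presupposes the solution whose positivity you are trying to establish, so it is a consistency check rather than a shortcut. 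Your main $\varepsilon$-perturbation argument, by contrast, only uses $0\le e_k\le 1$ pointwise and is unaffected by the state-dependence, so the proof stands once the remark is deleted or corrected.
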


\begin{proof} Summing up the equations yields $d(R_{0}+R_{1}+R_{2})/dt\equiv 0$. Hence at any moment of time, as at $t=0$, the sum of $R_k$ is equal to one  which proves a). Part b) follows from the fact that according to (\ref{case3}) once $R_k$ becomes equal to $0$ it can no longer decrease and once it becomes equal to $1$ it can no longer increase. Indeed, let for example, $R_0(t)=0$ for some $t$, while $0<R_1(t)<1$ and $0<R_2(t)<1$. Then the right hand side of the first equation in (\ref{case3}) is non-negative, or $dR_0(t)/dt\geq 0$. On the other hand, if $R_0(t)=1$ for some $t$ then it implies $R_1(t)=0$ and the right hand side of the equation for $dR_0/dt$ is non-positive, or $dR_0(t)/dt\leq 0$. The cases of $R_1$ and $R_2$ are similar. 
\end{proof}

To specify system (\ref{case3}) completely we need to define  the probabilities $e_k$. This takes several steps. The probability $s_k$ of selecting a member with a reputation $r_k$ to serve as a reviewer is set to be proportional to the member's reputation $r_k$:
\begin{equation}
\label{s_k}
s_k=\frac{r_kR_k}{r_{0}R_{0}+r_{1}R_{1}+r_{2}R_{2}},\qquad k=0,1,2.
\end{equation}

For $r_0=0$ the formula gives $s_0=0$ meaning that the members with zero reputation are never selected to be evaluators. Function $s_0(R_0,R_1,R_2)$ has a discontinuity at $R_0=1$, $R_1=0$, $R_2=0$.  We remove it by setting $s_0=0$ at this point.

Let $p_{c,ind}$ be the probability that a randomly selected evaluator correctly evaluates documents, and let a member with a reputation $r_k$ correctly evaluate documents with a probability $c_k$. Since a selected reviewer can have one of the three reputations,
\begin{equation}
\label{pc_knd}
\begin{split} 
p_{c,ind}& =s_0c_0+s_1c_1+s_2c_2.\\
\end{split}
\end{equation}

In this and the other models considered in this study whether the document is authentic or fake is independently decided by three reviewers. As with the choice of the values of $r_k$ we wanted to start the exploration of the model with a simple case. Three reviewers is the smallest number of reviewers who could make a decision by the majority without a tie. 

The probability that the document is correctly evaluated by the majority of reviewers is 
\begin{equation}
\label{p_c}
p_c  = p^3_{c,ind} + 3p^2_{c,ind}(1-p_{c,ind}).
\end{equation} 

The first term is the probability that all three evaluators correctly evaluate the document. The second term is the probability that one of the evaluators makes a mistake.

 The probabilities $e_k$, that the documents submitted by the members with reputation $r_k$ are evaluated as authentic, are sums of two probabilities. One is a product of the probability $a_k$ that the document submitted by a member with reputation $r_k$ is  authentic and the probability $p_c$ that it is correctly evaluated as authentic. The other probability is the product of the probability $1-a_k$ that the submitted document is fake and the probability $1-p_c$ that it is incorrectly evaluated as authentic:

\begin{equation}
\label{e_k}
e_k = a_k p_c + (1-a_k)(1-p_c), \qquad k=0,1,2. 
\end{equation}

Let the probabilities $a_k$ in (\ref{e_k}) and $c_k$ in (\ref{pc_knd}) be determined by functions of the members' reputation $a=a(r)$ and $c=c(r)$ and that $a_k=a(r_k)$ and $c_k=c(r_k)$. We assume that the probabilities of submitting authentic documents, $a(r)$, and correctly evaluating documents, $c(r)$, increase with the increase of reputation. We also posit that the members with zero reputation never submit authentic documents, $a(0)=0$, and never evaluate the documents of others correctly, $c(0)=0$, while the members with 100\% reputation always submit authentic documents, $a(1)=1$, and always evaluate the documents of others correctly, $c(1)=1$. Let $a(r)$ and $c(r)$ be at most quadratic functions of $r$. It can be directly verified that to satisfy the assumptions above they have to have the following form: 

\begin{equation}
\label{a_c_k}
\begin{split}
a(r) & =r(1+\alpha(1-r)), \qquad -1\leq \alpha \leq 1,\\
c(r) & =r(1+\sigma(1-r)), \qquad -1\leq \sigma \leq 1.\\
\end{split}
\end{equation}
\noindent 
When parameters $\alpha$ and $\sigma$ are positive, the functions $a(r)$ and $c(r)$ are concave (Fig.\ref{fig_auth}). In this case the probabilities of submitting authentic documents and correct document evaluation exceed the reputations of the members. When $\alpha$ and $\sigma$ are negative the functions $a(r)$ and $c(r)$ are convex and the probabilities of submitting authentic documents and correct document evaluation are less than the reputations of the members. (In the future studies it would be interesting to explore a model with more general assumptions $a(0), c(0)\geq 0$ and $a(1), c(1)\leq 1$). 

\begin{figure}[!ht]
	\centering
	\includegraphics[scale=0.6]{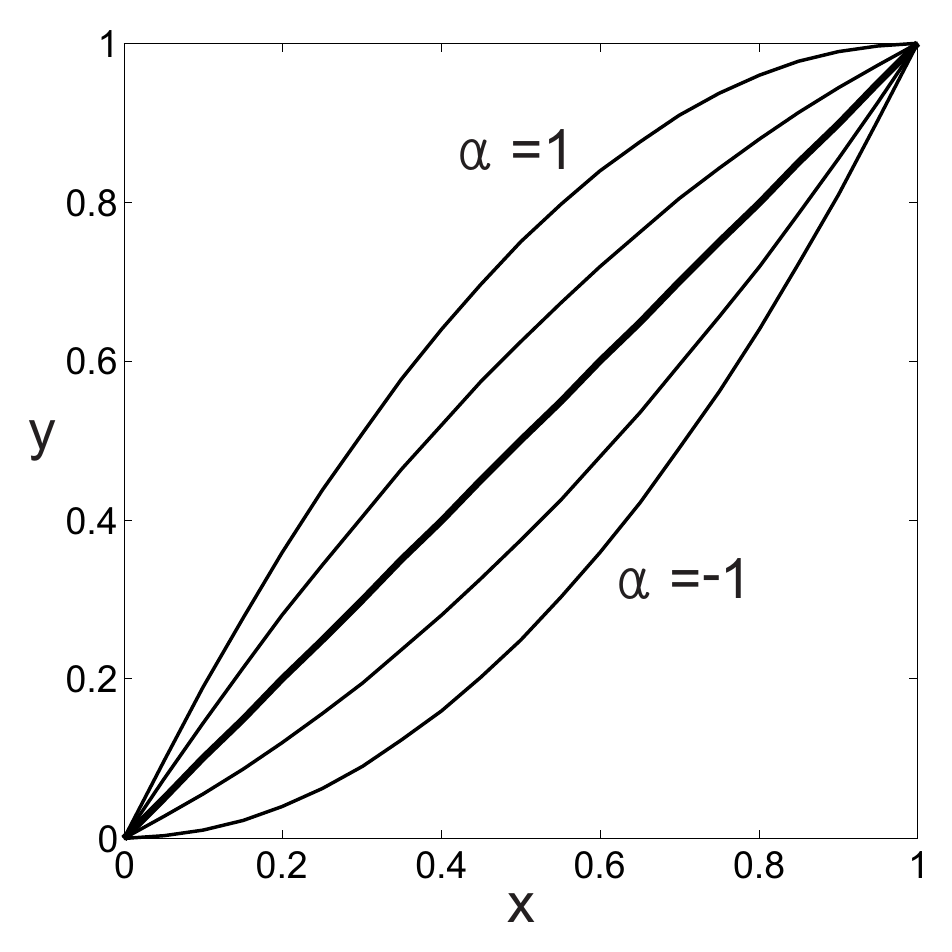} 
	\caption{Function $f(x)=x(1+\alpha(1-x))$ used in (\ref{a_c_k}). The line $y=x$ corresponds to $\alpha=0$. The two other unlabeled curves correspond to the $\alpha=0.5$ and $\alpha=-0.5$.}
	\label{fig_auth}
\end{figure}

System (\ref{case3}) is completely defined now. It is a non-linear system with five parameters: $r_0$, $r_1$, $r_2$, $\alpha$ and $\sigma$. Unless mentioned otherwise $r_0=0$, $r_1=0.5$, $r_2=1$.
We chose these values for two reasons. They simplify analysis, and for these values of $r_k$ the modeled system converges to a state at which all documents are evaluated correctly; see Proposition \ref{propo_lnh_equil} below.

\CAAnewsubsection{2. Equilibria}
System (\ref{case3}) is essentially two-dimensional since one of the variables $R_k$ can be excluded using the equality $R_0+R_1+R_2\equiv 1$. With the excluded $R_1$ system (\ref{case3}) turns into

\begin{equation}
\label{case3_2}
\begin{split}
\frac{dR_{0}}{dt} & = -R_{0}e_0+ (1-R_{0}-R_{2})(1-e_1)\\
\frac{dR_{2}}{dt} & = (1-R_{0}-R_{2})e_1-R_{2}(1-e_2).
\end{split}
\end{equation}

The Poincar\'e-–Bendixson theorem \cite{H02} implies that the long-term behavior of system (\ref{case3_2}) as a two-dimensional system, could be either an equilibrium, a limit cycle or a homo-/heteroclinic orbit.

The vector field of system (\ref{case3_2}) for different values of $\alpha$ and $\sigma$ indicates that the system has only equilibria that are located at the line $R_2=1-R_0$ (Fig.\ref{fig_vector_fields}). 

\begin{proposition} 
	\label{propo_lnh_equil}
	The points of the line $R_2=1-R_0$, $R_0\in [0,1]$, are equilibria of system (\ref{case3_2}). At these points all documents are evaluated correctly, $p_c=1$.
\end{proposition}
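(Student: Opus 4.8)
The plan is to verify directly that along the segment $R_2 = 1 - R_0$ --- equivalently $R_1 = 0$ --- the right-hand side of system (\ref{case3_2}) vanishes, and that $p_c = 1$ there; the whole argument hinges on the standing choice $r_0 = 0$, $r_2 = 1$ together with the boundary conditions $a(0) = 0$, $a(1) = 1$, $c(1) = 1$ built into (\ref{a_c_k}).

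First I would evaluate the selection probabilities (\ref{s_k}) on this segment. Since $r_0 = 0$ we get $s_0 = 0$, and since $R_1 = 0$ we get $s_1 = 0$; hence $s_2 = 1$ whenever the denominator $r_0 R_0 + r_1 R_1 + r_2 R_2 = r_2 R_2 = 1 - R_0$ is positive, i.e. for $R_0 \in [0,1)$. At the remaining endpoint $R_0 = 1$ (so $R_1 = R_2 = 0$) I would invoke the convention $s_0 = 0$ introduced right after (\ref{s_k}), and note that the continuous extension along the segment forces $s_2 = 1$ there as well, so the description $s_0 = s_1 = 0$, $s_2 = 1$ holds on all of $R_0 \in [0,1]$.

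From this, (\ref{pc_knd}) gives $p_{c,ind} = c_2 = c(r_2) = c(1) = 1$, and then (\ref{p_c}) gives $p_c = 1^3 + 3\cdot 1^2\cdot 0 = 1$, which is the second assertion. Substituting $p_c = 1$ into (\ref{e_k}) collapses each $e_k$ to $a_k$; in particular $e_0 = a(0) = 0$ and $e_2 = a(1) = 1$ by (\ref{a_c_k}). Putting these into (\ref{case3_2}) under the constraint $1 - R_0 - R_2 = 0$, every term carries a vanishing factor --- either $(1 - R_0 - R_2)$, or $e_0$, or $(1 - e_2)$ --- so $dR_0/dt = dR_2/dt = 0$, and every point of the segment is an equilibrium.

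The computation is otherwise routine; the only genuinely delicate point is the degenerate corner $R_0 = 1$, $R_1 = R_2 = 0$, where (\ref{s_k}) is a $0/0$ expression and one must lean on the stated convention (and continuity of $p_c$ along the segment) to conclude both $p_c = 1$ and equilibrium there. It is also worth stating explicitly that $r_0 = 0$ and $r_2 = 1$ are used essentially: for other values of the $r_k$ one would not have $s_0 = 0$, $p_{c,ind} = 1$, $e_0 = 0$, $e_2 = 1$ simultaneously, and the line would in general not consist of equilibria --- which is precisely why these particular values were singled out before the statement.
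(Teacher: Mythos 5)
Your proof is correct and follows essentially the same route as the paper's: a direct verification that $R_1=0$ forces $s_0=s_1=0$, $s_2=1$, hence $p_{c,ind}=c(1)=1$, $p_c=1$, $e_0=a(0)=0$, $e_2=a(1)=1$, and therefore both right-hand sides of (\ref{case3_2}) vanish. The only substantive difference is that you are more explicit about the degenerate corner $R_0=1$, $R_1=R_2=0$, where the paper's stated convention fixes only $s_0$ and one must indeed supply (as you do, by continuity along the segment) the value $s_2=1$ to conclude $p_c=1$ there.
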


\begin{proof} For the proof we need to verify that $-R_{0}e_0+ (1-R_{0}-R_{2})(1-e_1)=0$ and $(1-R_{0}-R_{2})e_1-R_{2}(1-e_2)=0$. The condition $R_2=1-R_0$ simplifies the equations to $R_0e_0=0$ and $R_2(1-e_2)=0$. The condition also implies $R_1=0$. It is directly verifiable that (\ref{s_k})-(\ref{a_c_k}) yield $p_c=1$, $a_0=0$, $a_2=1$, and then $e_0=0$ and $e_2=1$, which proves the proposition.
\end{proof}

In fact the proof holds for all the functions $a(r)$ and $c(r)$ such that $a(0)=0$ and $a(1)=c(1)=1$. The condition $a(0)=0$ implies that the members with zero reputation submit only fake documents. The condition $a(1)=c(1)=1$ implies that the members with 100\% reputation submit only authentic documents and make no mistakes in evaluation. 

The reasons why the states ($R_0, 1-R_0$) are equilibria are quite intuitive. At equilibrium, the evaluators are chosen only from the members with 100\% reputation who make no mistakes. Hence the members with zero reputation, who submit only fake documents, have no chance to increase their reputation, while the members with 100\% reputation, who submit only authentic documents, never loose it. It is not the case when there exist members whose reputation is more than zero and less than one. Such members can be selected to evaluate and can make mistakes in evaluation. Because of such mistakes, members with zero reputation can increase it and members with 100\% reputation can loose it.

Linear stability analysis does not allow to determine the stability of the equilibria $(R_0,1-R_0)$. When $R_0<1$, the matrix of the system (\ref{case3_2}), linearized at $(R_0,1-R_0)$, has the eigenvalues $-1$  and $0$, the latter meaning a critical case \cite{A12}. Computer simulations suggest that the equilibria $(R_0,1-R_0)$, $R_0\in[0,1]$, are stable in Lyapunov's sense, meaning that if the system starts evolving from near an equilibrium it will remain near the equilibrium forever \cite{A12} (cf. Fig.\ref{fig_vector_fields}).

\begin{figure}[!h]
	\centering
	\includegraphics[scale=0.7]{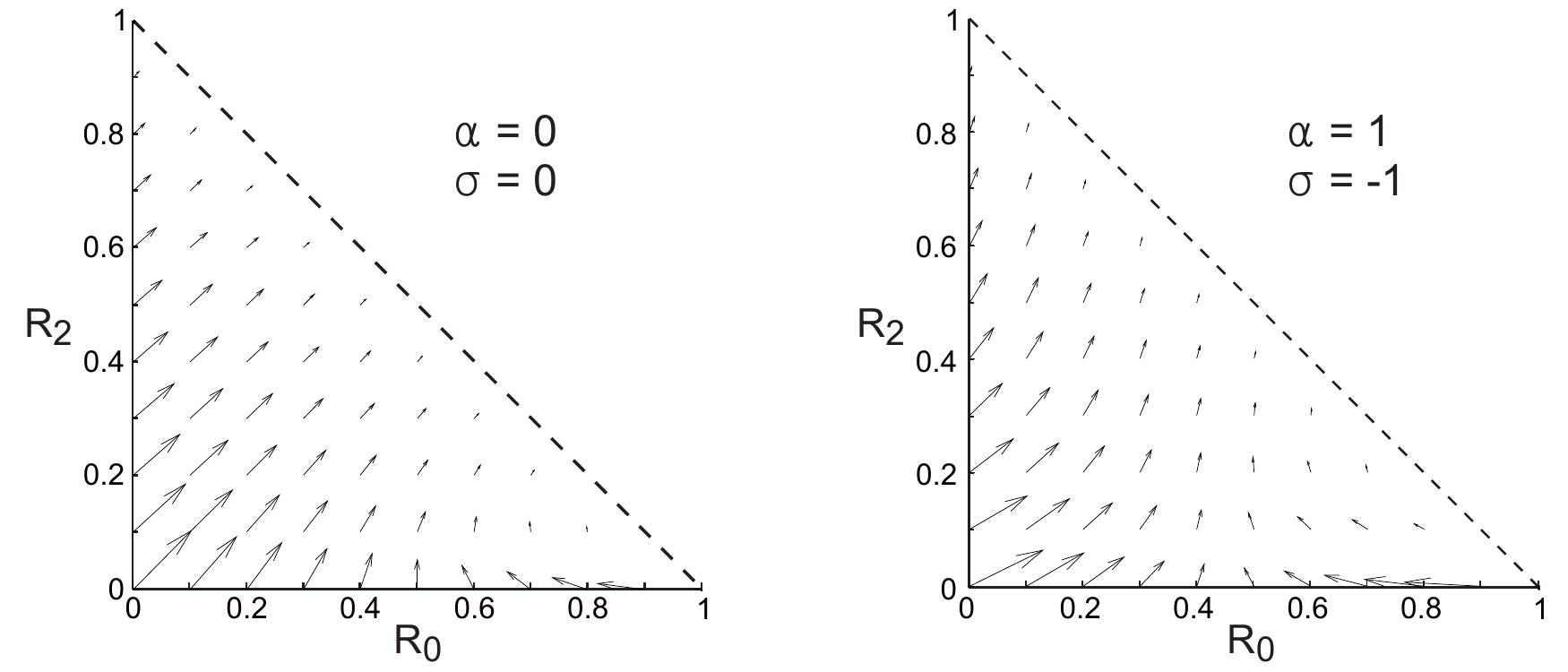} 
	\caption{The vector field of model (\ref{case3_2}). $\alpha=\sigma=0$ (left), $\alpha=1$, $\sigma=-1$ (right).}
	\label{fig_vector_fields}
\end{figure}

\CAAnewsection{3. Communities with multiple levels of reputation}
\label{Communities with multiple levels of reputation}
System (\ref{case3}) can be directly generalized to the case with more than three levels of reputation. Let $r_k=\Delta k$, $\Delta =1/L$, $k=0,\cdots, L$. (Equally spaced values of $r_k$ are defined by just one parameter, $L$. This is important at this initial stage of the study since the model has a large number of other parameters. It is possible that for other choices of $r_k$ the model performs better. This is a question for future research.) The equations for $R_k$, $0\leq R_k\leq 1$, $\sum_{k=0}^LR_k=1$, take the form

\begin{equation}
\label{caseL}
\begin{split}
\frac{dR_{0}}{dt} & = -R_{0}e_0+R_{1}(1-e_1)\\
& \vdots \\
\frac{dR_{k}}{dt} & = R_{k-1}e_{k-1}-R_{k}+R_{k+1}(1-e_{k+1}), \qquad k =1,\cdots,L-1,\\
& \vdots \\
\frac{dR_{L}}{dt} & = R_{L-1}e_{L-1}-R_{L}(1-e_{L}).
\end{split}
\end{equation} 

Definitions (\ref{e_k})-(\ref{a_c_k}) are generalized in a straightforward way. As in the case of system (\ref{case3_2}), system (\ref{caseL}) has two parameters: $\alpha$ and $\sigma$.

System (\ref{caseL}) has a similar set of equilibria as system \ref{case3_2} for the community with three levels of reputation.

\begin{proposition} 
	\label{propo_caseL_equil}
	The points of the line $(R_0,0,\cdots,0,1-R_0)$, $R_0\in [0,1]$, are equilibria of system (\ref{caseL}). At these points all documents are evaluated correctly, $p_c=1$.
\end{proposition}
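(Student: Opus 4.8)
The plan is to mirror exactly the structure of the proof of Proposition \ref{propo_lnh_equil}, since the claimed equilibria here are the natural generalization: states where all mass sits at reputation $r_0=0$ and at reputation $r_L=1$, with nothing in between. First I would substitute the candidate point $(R_0,0,\dots,0,1-R_0)$, i.e.\ $R_1=\cdots=R_{L-1}=0$ and $R_L=1-R_0$, into the right-hand sides of (\ref{caseL}) and check each of the $L+1$ equations vanishes. The key observation needed to make this work is that at such a state the only members eligible to be selected as reviewers are those with reputation $r_L=1$: by (\ref{s_k}), $s_k=r_kR_k/\sum_j r_jR_j$, and with $R_1=\cdots=R_{L-1}=0$ and $r_0=0$ the numerators all vanish except for $k=L$, so $s_L=1$ and $s_k=0$ otherwise (with the usual convention removing the singularity when $R_0=1$).

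Given $s_L=1$, equation (\ref{pc_knd}) (in its generalized form) gives $p_{c,ind}=c_L=c(1)=1$, and then (\ref{p_c}) gives $p_c=1$, which is the second assertion of the proposition. Next I would feed $p_c=1$ into (\ref{e_k}): for $k=0$ we have $a_0=a(0)=0$, so $e_0 = 0\cdot 1 + 1\cdot 0 = 0$; for $k=L$ we have $a_L=a(1)=1$, so $e_L = 1\cdot 1 + 0\cdot 0 = 1$. The intermediate values $e_1,\dots,e_{L-1}$ need not be computed, because they are multiplied by the coefficients $R_1,\dots,R_{L-1}$, all of which are zero. Substituting into (\ref{caseL}): the first equation becomes $-R_0 e_0 + R_1(1-e_1) = -R_0\cdot 0 + 0 = 0$; each middle equation ($k=1,\dots,L-1$) becomes $R_{k-1}e_{k-1} - R_k + R_{k+1}(1-e_{k+1})$, where every term has a vanishing factor among $R_{k-1},R_k,R_{k+1}$ (all indices here lie in $\{1,\dots,L-1\}$, hence all three $R$'s are zero); and the last equation becomes $R_{L-1}e_{L-1} - R_L(1-e_L) = 0 - (1-R_0)(1-1) = 0$. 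Hence every component of the vector field vanishes and the point is an equilibrium.

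I do not expect any real obstacle here; the proposition is essentially a routine verification, and the only subtlety worth flagging is the removable discontinuity of $s_0$ (and now also the fact that $s_L$ is the only nonzero selection probability), which the paper has already handled by convention. One could add a remark, parallel to the one following Proposition \ref{propo_lnh_equil}, that the argument only uses $a(0)=0$ and $a(1)=c(1)=1$, not the specific quadratic form (\ref{a_c_k}) or the equal spacing $r_k=\Delta k$; all that is genuinely needed is $r_0=0$, $r_L=1$, $a(r_0)=0$, and $a(r_L)=c(r_L)=1$. The heuristic explanation is identical to the three-level case: once all reviewers have perfect reputation and make no mistakes, the zero-reputation members (who submit only fakes) can never gain reputation and the full-reputation members (who submit only authentic documents) can never lose it, so the distribution is frozen.
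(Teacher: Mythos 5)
Your proof is correct and follows essentially the same route as the paper's, which simply observes that on the line $(R_0,0,\cdots,0,1-R_0)$ all the middle equations of (\ref{caseL}) vanish trivially and the first and last reduce to $R_{0}e_0=0$ and $R_L(1-e_L)=0$, handled exactly as in Proposition \ref{propo_lnh_equil} via $s_L=1$, $p_c=1$, $e_0=0$, $e_L=1$. One parenthetical in your argument is slightly off: for $k=1$ and $k=L-1$ the middle equations involve $R_0$ and $R_L$, whose indices do \emph{not} lie in $\{1,\dots,L-1\}$ and which need not vanish --- but the corresponding terms $R_0e_0$ and $R_L(1-e_{L})$ are still zero because you have already established $e_0=0$ and $e_L=1$, so the verification goes through unchanged.
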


\begin{proof} For the proof we need to verify that on the line $(R_0,0,\cdots,0,1-R_0)$ all the right-hand sides of (\ref{caseL}) are equal to zero. This is trivially true for all the equations but the first one, 
	$R_{0}e_0=0$, and the last one, $R_L(1-e_L)=0$. For these two equations the reasoning is the same as in the proof of Proposition \ref{propo_lnh_equil}.
\end{proof}

The numerical results presented in the next section suggest that as in the case of system (\ref{case3_2}) these equilibria are stable in Lyapunov's sense for all $\alpha, \sigma \in [-1,1]$. However they are not globally stable at least for some values of $\alpha$ and $\sigma$.  

\CAAnewsection{4. Computer simulations}
\label{Computer simulations}
In what follows $L=10$ and $r_k=0.1k$, $k=0,\cdots, 10$. Since the dynamical system is ten--dimensional (only ten independent $R_k$s) the Poincar\'e-–Bendixson theorem is not applicable. Figure \ref{fig_reps10_T100_Rk.pdf} shows the dynamics of the system, typical for the majority of the initial values and values of $\alpha$ and $\sigma$. Two `forces' pull $R_k$s in the opposite directions -- the members either gain reputation until it reaches the maximal value of 100\%, or loose reputation until it reaches its minimal value of 0\%. The most common initial condition considered is supposed to model the `birth' of the community, when it consists of new members only, and every new member has the same initial reputation that we set to be 60\%.

\begin{figure}[!th]
	\centering
	\includegraphics[scale=0.7]{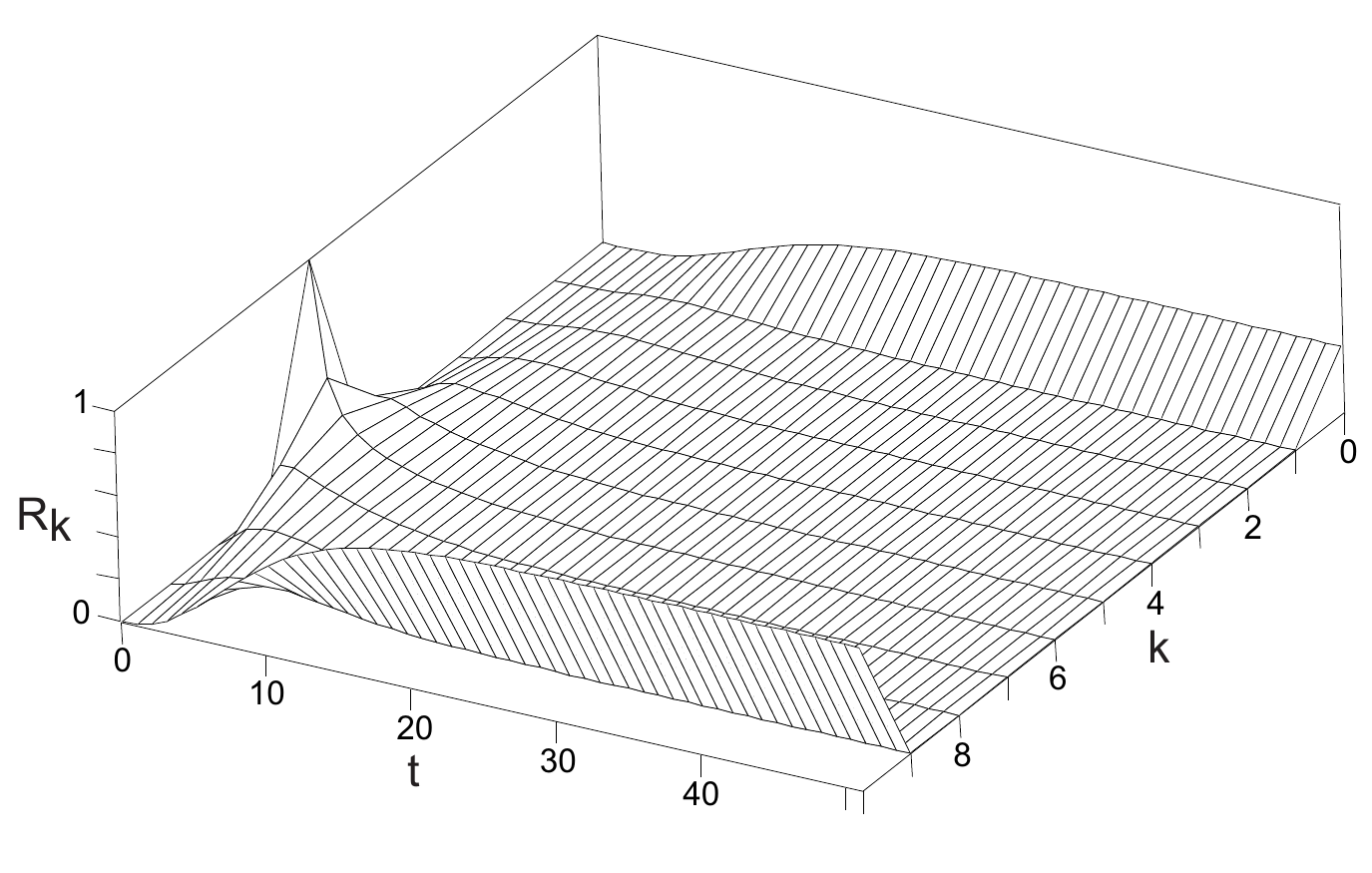} 
	\caption{Evolution of the model with eleven levels of reputation to a bimodal distribution. $R_k$ are the proportions of the members with reputations $r_k=0.1k$, $k=0,\cdots,10$. Initially, at $t=0$, $R_6=1$, while all the other $R_k=0$. Over time, all the members attain either zero ($k=0$) or 100\% ($k=10$) reputation, and the probability of correct document evaluation $p_c$ approaches one. $\alpha=\sigma=0$.}
	\label{fig_reps10_T100_Rk.pdf}
\end{figure}

The dynamics of system (\ref{caseL}) is more complex compared to system (\ref{case3}). For $\alpha$ and $\sigma$ near $\alpha=1$, $\sigma=-1$ the system shows bi-stability -- the equilibrium state depends on initial values of $R_k$. The existence of an equilibrium different from those described in Proposition \ref{propo_caseL_equil} is shown in Figure \ref{fig_reps10_T100_small_Rk.pdf}. The initial conditions are the same as in the case of Fig.\ref{fig_reps10_T100_Rk.pdf}  but $\alpha=1$, $\sigma=-1$. As the figure shows, the `force' that makes members of the community lose their reputation dominates and the equilibrium is attained with the majority of the members having low reputations; the most numerous category of the members have the reputation equal to 30\%. At that equilibrium state $p_c=0.07$ meaning evaluators are almost always wrong. However, with the initial values $R_k=0$, $k\neq 7$, and $R_7=1$, the system converges to a bimodal distribution $(0.0425,0,\cdots,0, 0.9575)$ with $p_c=1$ (not shown). The probability of correct document evaluation $p_c$ at the equilibrium states of system (\ref{caseL}) and various values of $\alpha$ and $\sigma$ is shown in Fig.\ref{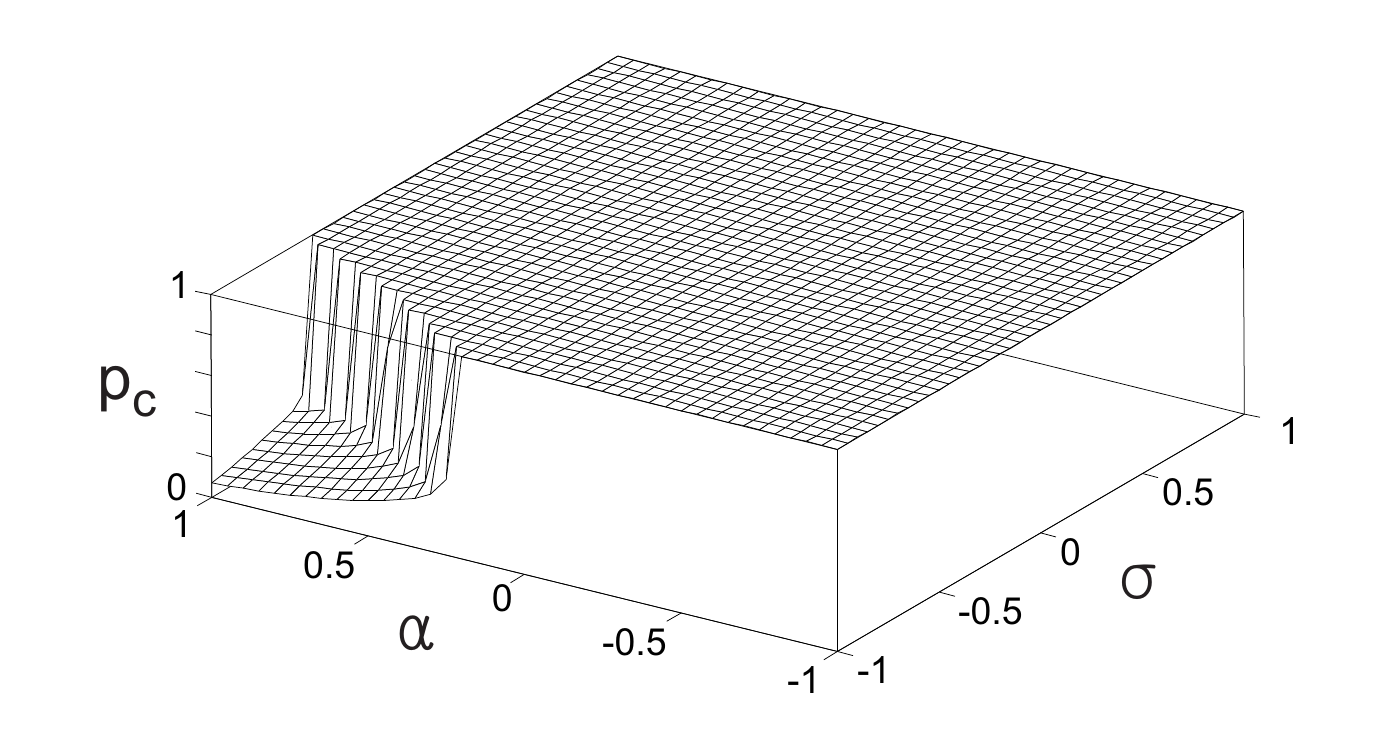}. For all the values of $\alpha$ and $\sigma$ initial values are the same as in Figures \ref{fig_reps10_T100_Rk.pdf} and \ref{fig_reps10_T100_small_Rk.pdf}.

Computations show that whether and how the system exhibits bi-stability depends on the dimension of the system, and parameters  $\alpha$ and $\sigma$. In these computations, all the members had initial reputation equal to 0.6. In the case of the 5--dimensional system with the reputation values $r=(0,0.2,\cdots,1)$ the system behaves as in Fig.\ref{fig_reps10_T100_Rk.pdf} and converges to an equilibrium state with $p_c$=1. In the case of the 20--dimensional system with the reputation values $r=(0,0.05,\cdots,1)$ the system behaves as in Fig.\ref{fig_reps10_T100_small_Rk.pdf} and converges to an equilibrium state with $p_c=0.04$. Making all initial reputations in the 20--dimensional system equal to 0.65 does not change the behavior quantitatively. However, the equilibrium state changes drastically when all initial reputations are set to 0.7. Then the system behaves as in Fig.\ref{fig_reps10_T100_Rk.pdf} with an equilibrium state at which $p_c=1$. 

\begin{figure}[!th]
	\centering
	\includegraphics[scale=0.75]{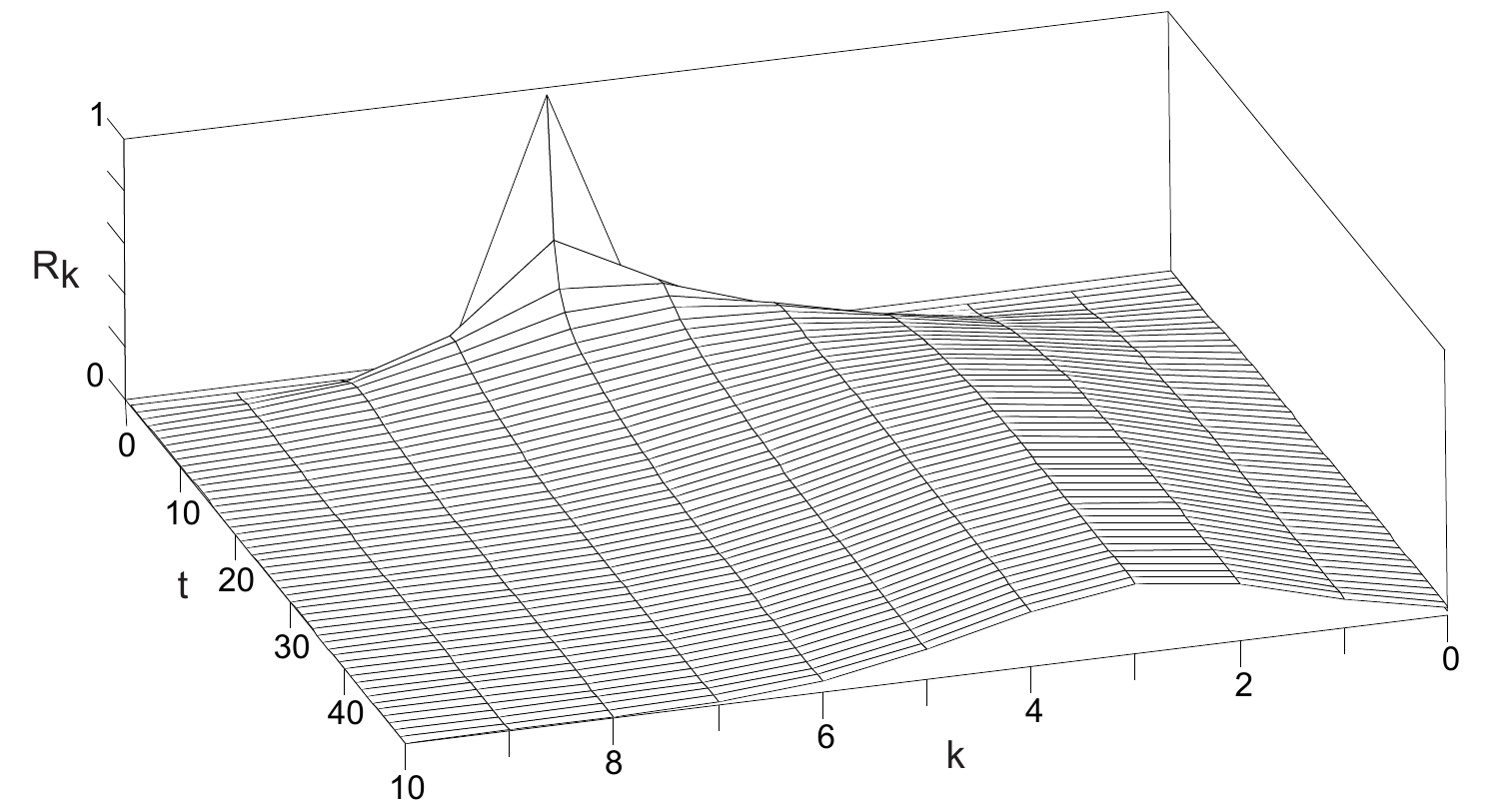} 
	\caption{Evolution of the model with eleven levels of reputation to a unimodal distribution. $R_k$ are the proportions of the members with reputations $r_k=0.1k$, $k=0,\cdots,10$. At $t=0$, $R_6=1$ while all the other $R_k=0$. At the equilibrium, most of the members have low reputations. The members in the most numerous category have reputation 30\% ($k=3$). At this state the probability of correct document evaluation $p_c=0.07$. $\alpha=1$, $\sigma=-1$.}
	\label{fig_reps10_T100_small_Rk.pdf}
\end{figure}

\begin{figure}[!th]
	\centering
	\includegraphics[scale=0.75]{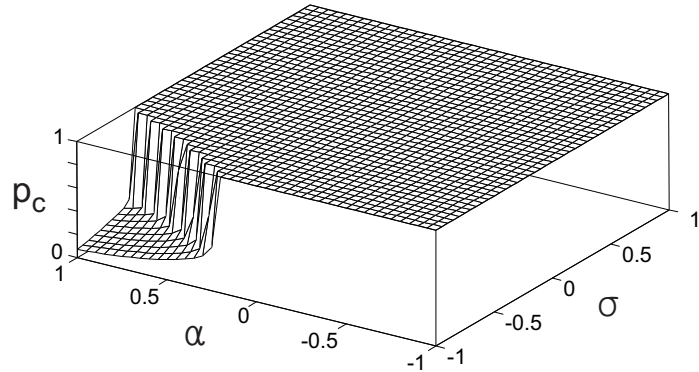} 
	\caption{Probability of correct document evaluation $p_c$ in the model of a community without cliques is equal to one for most values of parameters $\alpha$ and $\sigma$. At $t=0$, $R_6=1$ while all the other $R_k=0$.}
	\label{fig_reps10_T100_Pc_v2.pdf}
\end{figure}

In summary, simulations suggest that in the absence of cliques the behavior of the modeled community evolves to one of two stable steady states. In one state, all the members have either 100\% reputation or zero reputation; see example in Fig.\ref{fig_reps10_T100_Rk.pdf}. In this state, all the documents are evaluated correctly for a majority of the values of $\alpha$ and $\sigma$ that specify functions $a(r)$ and $c(r)$ (\ref{a_c_k}). In the other state, the majority of community members have low reputations; see example in Fig. \ref{fig_reps10_T100_small_Rk.pdf}. In this state, the documents are rarely evaluated correctly. The system evolves to one state or another depending on initial conditions which in turn depend on the dimension of the system. Detailed analysis of this bi-stability was beyond the scope of the present study.

\CAAnewsection{5. Community with a clique}
\label{Community with a clique}
In real life there are groups of agents with agendas. For those groups their agendas are the only thing that matters. Groups of this kind are called cliques here. Consider a system with one clique. Let $f_{cl}$ be the fraction of clique members in the community. The remaining $1-f_{cl}$ fraction are `regular' members.
We extend the system with regular members, considered in the previous section, by adding $L+1$ phase variables $Q_k$, $k=0,\cdots,L$, $0\leq Q_k \leq 1$, for the proportions of the clique members with reputations $r_k$, $Q_0+Q_1+\cdots+Q_L=f_{cl}$.

Table \ref{1cl_subm} defines the probabilities of submitting documents of various types by regular and  clique members. Parameter $p_{\lambda}$ stands for the probability that a document submitted by a regular member supports the clique's views. For simplicity, the probability that a document submitted by a regular member contradicts the clique's views is set to $p_{\lambda}$ as well. The probability that a document has nothing to do with the clique's agenda is $1-2p_{\lambda}$. Clique members submit only the documents that support the clique's view. Parameter $\gamma$, $0\leq \gamma \leq 1$, determines to what extent clique members are able to submit authentic documents in accord with their reputation; for small values of $\gamma$, even highly reputable clique members submit mostly fake documents.  Probabilities $a_k$ of submitting authentic documents by the members with reputation $r_k$ are defined by (\ref{a_c_k}).

\begin{table}
	\vspace{\baselineskip}
	\caption{Probabilities of {\emph{submitting}} various types of documents by regular and clique members with reputations $r_k$, $k=0,\cdots, L$. }  
	\vspace{\baselineskip}
	\begin{center}
		\begin{tabular}{l *{2}{r}}
			\toprule
			Document type & Regular members & Clique members\\
			\bottomrule
			Generic, authentic & $(1-2p_{\lambda}) a_k$ & $0$ \\
			\midrule
			Generic, fake & $(1-2p_{\lambda})(1-a_k)$ &  $0$\\
			\midrule
			Clique, authentic & $p_{\lambda}a_k$ & $\gamma a_k$ \\
			\midrule
			Clique, fake & $p_{\lambda}(1-a_k)$ & $1-\gamma a_k$ \\
			\midrule
			Anti-clique, authentic & $p_{\lambda}a_k$ & $0$ \\
			\midrule
			Anti-clique, fake & $p_{\lambda}(1-a_k)$ & $0$ \\
			\bottomrule
		\end{tabular}
	\end{center}
	\label{1cl_subm}
\end{table}
\vspace{\baselineskip}

Table \ref{1cl_eval} defines the probabilities of correct evaluation of particular documents by regular and clique members. The probabilities $c_k$ of correct document evaluation by the members with reputations $r_k$ are defined by (\ref{a_c_k}). In contrast to regular members, who evaluate all the documents in the same way, clique members evaluate documents differently depending whether the documents support the clique's views or not. We assume they evaluate generic documents by flipping a coin because they do not care about such documents. All the documents that support the clique's views are evaluated as authentic. All the documents that contradict the clique's views are evaluated as fake.   

\begin{table}
	\vspace{\baselineskip}
	\caption{Probabilities of correct document {\emph{evaluation}} by regular and clique members with reputations $r_k$, $k=0,\cdots,L$. }  
	\vspace{\baselineskip}
	\begin{center}
		\begin{tabular}{l *{2}{r}}
			\toprule
			Document type & Regular members & Clique members\\
			\bottomrule
			Generic, authentic & $c_k$ & $0.5$ \\
			\midrule
			Generic, fake & $c_k$ &  $0.5$\\
			\midrule
			Clique, authentic & $c_k$ & $1$ \\
			\midrule
			Clique, fake & $c_k$& 0 \\
			\midrule
			Anti-clique, authentic & $c_k$ & $0$ \\
			\midrule
			Anti-clique, fake & $c_k$ & 1 \\
			\bottomrule
		\end{tabular}
	\end{center}
	\label{1cl_eval}
\end{table}

The differential equations for $Q_k$ are similar for those for $R_k$:

\begin{equation}
\label{caseL_1cl}
\begin{split}
\frac{dR_{0}}{dt} & = -R_{0}e_0^{(reg)}+R_{1}(1-e_1^{(reg)})\\
& \vdots \\
\frac{dR_{k}}{dt} & = R_{k-1}e_{k-1}^{(reg)}-R_{k}+R_{k+1}(1-e_{k+1}^{(reg)}), \qquad k =1,\cdots,L-1,\\
& \vdots \\
\frac{dR_{L}}{dt} & = R_{L-1}e_{L-1}^{(reg)}-R_{L}(1-e_{L}^{(reg)})\\
\frac{dQ_{0}}{dt} & = -Q_{0}e_0^{(cl)}+Q_{1}(1-e_1^{(cl)})\\
& \vdots \\
\frac{dQ_{k}}{dt} & = Q_{k-1}e_{k-1}^{(cl)}-Q_{k}+Q_{k+1}(1-e_{k+1}^{(cl)}), \qquad k =1,\cdots,L-1,\\
& \vdots \\
\frac{dQ_{L}}{dt} & = Q_{L-1}e_{L-1}^{(cl)}-Q_{L}(1-e_{L}^{(cl)}).
\end{split}
\end{equation} 

In (\ref{caseL_1cl}), $e_k^{(reg)}$, $0\leq k \leq L$, stand for the probability that the documents submitted by  the regular members with the reputation $r_k$ are evaluated as authentic. Similarly, $e_k^{(cl)}$, $0\leq k \leq L$, stand for the probabilities that the documents, submitted by clique members with the reputation $r_k$, are evaluated as authentic. As in (\ref{case3}) and (\ref{caseL}) the summation of all the equations in  (\ref{caseL_1cl}) shows that the sum of all $R_k$ does not change over time and is always equal to $1-f_{cl}$, and the sum of all $Q_k$ does not change over time, and is always equal to $f_{cl}$.

In the model of a community without cliques, the probability $e_k$ of evaluating documents as authentic is equal to the sum of the probabilities that authentic and fake documents submitted by the members with reputation $r_k$ are evaluated as authentic. In the model with a clique we consider authentic and fake documents of three possible types: those that have nothing to do with the clique's agenda (`Generic' or `g'), those that support the clique's views (`Clique' or `q'), and those that contradict the clique's views (`Anti-clique' or `\=q'). Each of these three types of documents can be authentic (`A') or fake (`F'). Table \ref{1cl_subm} defines the probability of evaluating a document of each category as authentic. Following the table we obtain for regular agents 

\begin{equation}
\label{e_k_n}
\begin{split}
e_k^{(reg)} & = (1-2p_{\lambda})a_k p_c(g,A)+(1-2p_{\lambda})(1-a_k)p_m(g,F) \\
& + p_{\lambda}a_k p_c(q,A) +p_{\lambda}(1-a_k)p_m (q,F) \\
& + p_{\lambda}a_k p_c(\bar{q},A)+ p_{\lambda}(1-a_k) p_m(\bar{q},F),  \qquad k=0,\cdots,L.
\end{split}
\end{equation}

Every term is a product of three probabilities. In the first term, these are the probabilities that the document is generic/authentic/correctly evaluated as authentic: $1-2p_{\lambda}$/$a_k$/$p_c(g,A)$.  In the second term, these are the probabilities that the document is generic/fake/mistakenly evaluated as authentic: $1-2p_{\lambda}$/$1-a_k$/$p_m(g,F)$. In the third term, these are the probabilities that the document supports the clique's views/authentic/correctly evaluated as authentic: $p_{\lambda}$/$a_k$/$p_c(q,A)$. In the fourth term, these are the probabilities that the document supports the clique's views/fake/mistakenly evaluated as authentic: $p_{\lambda}$/$1-a_k$/$p_m(q,F)$. In the fifth term, these are the probabilities that the document contradicts the clique's views/authentic/correctly evaluated as authentic: $p_{\lambda}$/$a_k$/$p_c(\bar{q},T)$. Finally, in the sixth term, these are the probabilities that the document  contradicts the clique's views/fake/mistakenly evaluated as authentic: $p_{\lambda}$/$1-a_k$/$p_m(\bar{q},F)$.

Taking into account that clique members submit only documents related to clique agenda (cf. Table \ref{1cl_subm})
\begin{equation}
\label{e_k_c}
e_k^{(cl)} = \gamma a_k p_c(q,A) + (1-\gamma a_k) p_m(q,F),  \qquad k=0,\cdots,L.
\end{equation}

The probability of correct evaluation, $p_c$, and the probability of false evaluation, $p_m$, in (\ref{e_k_n}) and (\ref{e_k_c})  depend on the corresponding probabilities for individual evaluators (cf. (\ref{p_c})). Namely,

\begin{equation}
\begin{aligned}
p_c(g,A) & = p_{c,ind}^2(g,A)(3-2p_{c,ind}(g,A)), &p_c(g,F) & = p_{c,ind}^2(g,F)(3-2p_{c,ind}(g,F)),\\ 
p_c(q,A) & = p_{c,ind}^2(q,A)(3-2p_{c,ind}(q,A)), &p_c(q,F) & = p_{c,ind}^2(q,F)(3-2p_{c,ind}(q,F)),\\
p_c(\bar{q},A) & = p_{c,ind}^2(\bar{q},A)(3-2p_{c,ind}(\bar{q},A)), &p_c(\bar{q},F) &= p_{c,ind}^2(\bar{q},F)(3-2p_{c,ind}(\bar{q},F)), 
\end{aligned}
\end{equation}

\noindent
where, following (\ref{pc_knd}) and Table \ref{1cl_eval},

\begin{equation}
\label{pcind_cl} 
\begin{aligned}
p_{c,ind}(g,A) & = \sum_{k=0}^Ls_k^{(reg)} c_k+0.5\sum_{k=0}^Ls_k^{(cl)},     &p_{m,ind}(g,F) &=\sum_{k=0}^Ls_k^{(reg)} (1-c_k)+0.5\sum_{k=0}^Ls_k^{(cl)},\\
p_{c,ind}(q,A) & = \sum_{k=0}^Ls_k^{(reg)} c_k+ \sum_{k=0}^Ls_k^{(cl)},     &p_{m,ind}(q,F) &= \sum_{k=0}^L s_k^{(reg)} (1-c_k)+\sum_{k=0}^Ls_k^{(cl)},\\
p_{c,ind}(\bar{q},A) & = \sum_{k=0}^Ls_k^{(reg)} c_k,  &p_{m,ind}(\bar{q},F) &= \sum_{k=0}^Ls_k^{(reg)}(1-c_k).
\end{aligned}
\end{equation}

\noindent
In (\ref{pcind_cl}), the probabilities of correct evaluation $c_k$ are defined as in (\ref{a_c_k}). 

The probability  of selecting a regular member with a reputation $r_k$ is  

\begin{equation}
\label{s_n}
s_k^{(reg)}= \frac{r_kR_k}{\sum_{i=0}^Lr_i (R_i +Q_i)}, 
\end{equation}

\noindent
and, similarly, the probability  of selecting a clique member with a reputation $r_k$ is

\begin{equation}
\label{s_c}
s_k^{(cl)}= \frac{r_kQ_k}{\sum_{i=0}^Lr_i (R_i +Q_i)}. 
\end{equation}

The main characteristic of the system's functionality is the probability $p_c$ of correct evaluation of a document. All three types of documents have to be taken into account: generic, `g', supporting the clique's views, `q', and contradicting the clique's views, `\=q':

\begin{equation}
\label{pc_avg_cl}
\begin{split}
p_c & = Prob(g,A)p_c(g,A)+Prob(g,F)(1-p_m(g,F))\\
& + Prob(q,A)p_c(q,A)+Prob(q,F)(1-p_m(c,F))\\
& + Prob(\bar{q},A)p_c(\bar{q},A)+Prob(\bar{q},F)(1-p_m(\bar{q},F)).
\end{split}
\end{equation}

Here, the probabilities of submitted  authentic, `A', or fake, `F', documents of types $g$, $q$, and $\bar{q}$ are

\begin{equation}
\label{prob_doc_cl} 
\begin{split}
Prob(g,A) & = (1-2p_{\lambda})\sum_{k=0}^LR_k a_k,\\
Prob(g,F) & = (1-2p_{\lambda}) \sum_{k=0}^LR_k (1-a_k),\\
Prob(q,A) & = p_{\lambda}\sum_{k=0}^LR_k a_k + \sum_{k=0}^LQ_k \gamma a_k,\\
Prob(q,F) & = p_{\lambda}\sum_{k=0}^LR_k (1-a_k) + \sum_{k=0}^LQ_k (1-\gamma a_k),\\
Prob(\bar{q},A) & = p_{\lambda}\sum_{k=0}^LR_ka_k \\
Prob(\bar{q},F) & = p_{\lambda}\sum_{k=0}^LR_k (1-a_k).
\end{split}
\end{equation}

In summary, the model of a community with a clique has three parameters, additional to the parameters of the model (\ref{caseL}) of a community without cliques: $f_{cl}$, the fraction of the clique in the community, $p_{\lambda}$, the proportion of documents matching the clique's agenda, and $\gamma$, the factor that stands for decreasing the probability of submitting authentic documents by clique members. 

System (\ref{caseL_1cl}), similar to (\ref{caseL}), has easy-to-guess equilibria at which $p_c=1$.

\begin{proposition}\label{propo_1cl_equil} 
		System (\ref{caseL_1cl}) has  equilibria at which $R_L=1-f_{cl}-R_0$, $R_0\in [0,1-f_{cl}]$, $Q_0=f_{cl}$, and all the other phase variables are equal to zero. At the equilibria $p_c=1$.
\end{proposition}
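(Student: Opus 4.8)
The plan is to follow the template of the proofs of Propositions~\ref{propo_lnh_equil} and~\ref{propo_caseL_equil}: substitute the candidate configuration -- the $R$-block $(R_0,0,\dots,0,1-f_{cl}-R_0)$ together with the $Q$-block $(f_{cl},0,\dots,0)$ -- into the right-hand sides of (\ref{caseL_1cl}), show that each one vanishes, and then read off $p_c$ from (\ref{pc_avg_cl}). The structural fact that makes everything collapse is that $r_0=0$: at the candidate point the only members carrying nonzero reputation-weight are the regular members with reputation $r_L=1$, whose mass is $R_L=1-f_{cl}-R_0$, since every other regular reputation class has zero mass and \emph{every} clique member sits at reputation $r_0=0$. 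Hence, with probability one, each of the three reviewers is a flawless evaluator.

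First I would compute the selection probabilities at the candidate point. Taking $R_0<1-f_{cl}$, so that the denominator $\sum_i r_i(R_i+Q_i)=r_LR_L>0$, formulas (\ref{s_n})--(\ref{s_c}) collapse to $s_L^{(reg)}=1$, $s_k^{(reg)}=0$ for $k\neq L$, and $s_k^{(cl)}=0$ for all $k$. Since $c_L=c(1)=1$ by (\ref{a_c_k}), the expressions (\ref{pcind_cl}) then give $p_{c,ind}(\cdot,A)=1$ and $p_{m,ind}(\cdot,F)=0$ for each document type $g,q,\bar{q}$, whence the majority-rule formulas (cf.\ (\ref{p_c})) yield $p_c(\cdot,A)=1$ and $p_m(\cdot,F)=0$ for every type. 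The degenerate endpoint $R_0=1-f_{cl}$, where the selection denominator vanishes and the whole community sits at reputation zero, is treated with the same convention used for the analogous discontinuity in Section~2; it is otherwise immediate, since then all masses except $R_0$ and $Q_0$ are zero while $a_0=0$.

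Next I would substitute $a_0=a(0)=0$ and $a_L=a(1)=1$ into (\ref{e_k_n}) and (\ref{e_k_c}). With all $p_m(\cdot,F)=0$ the $k=0$ lines reduce to $e_0^{(reg)}=0$ and $e_0^{(cl)}=0$; with all $p_c(\cdot,A)=1$ and the coefficients $1-2p_{\lambda},p_{\lambda},p_{\lambda}$ summing to one, the $k=L$ line gives $e_L^{(reg)}=1$. In (\ref{caseL_1cl}) every term carrying an $R_k$ with $1\le k\le L-1$ or a $Q_k$ with $1\le k\le L$ is multiplied by a vanishing mass, so the only terms that are not automatically zero are those involving the factors $e_0^{(reg)}$, $1-e_L^{(reg)}$, and $e_0^{(cl)}$ (multiplying $R_0$, $R_L$, and $Q_0$ respectively); these factors have just been shown to vanish. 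Hence the candidate configuration is an equilibrium of (\ref{caseL_1cl}).

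Finally, for $p_c=1$: the six quantities $Prob(\cdot,\cdot)$ in (\ref{prob_doc_cl}) are the probabilities of the six mutually exclusive and exhaustive categories of submitted document, so they sum to $1$ (a one-line check using $\sum_k R_k=1-f_{cl}$ and $\sum_k Q_k=f_{cl}$). Substituting $p_c(\cdot,A)=1$ and $p_m(\cdot,F)=0$ into (\ref{pc_avg_cl}) then gives $p_c=\sum Prob(\cdot,\cdot)=1$. I do not anticipate a genuine obstacle: the only thing that needs care is respecting the order of the substitution chain $s_k\to(p_{c,ind},p_{m,ind})\to(p_c,p_m)\to e_k$ and then the right-hand sides of (\ref{caseL_1cl}), together with the degenerate denominator at $R_0=1-f_{cl}$; none of it is deep.
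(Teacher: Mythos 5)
Your proposal is correct and follows essentially the same route as the paper's proof: compute $s_L^{(reg)}=1$ with all other selection probabilities zero, deduce $p_{c,ind}(\cdot,A)=1$ and $p_{m,ind}(\cdot,F)=0$, hence $e_0^{(reg)}=0$, $e_L^{(reg)}=1$, $e_0^{(cl)}=0$, and conclude that all right-hand sides of (\ref{caseL_1cl}) vanish and that (\ref{pc_avg_cl}) gives $p_c=1$. Your explicit treatment of the degenerate endpoint $R_0=1-f_{cl}$ (vanishing selection denominator) and the explicit check that the six probabilities in (\ref{prob_doc_cl}) sum to one are details the paper leaves implicit, but they do not change the argument.
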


\begin{proof} Substitution of the equilibrium values of $R_k$ and $Q_k$ into (\ref{s_n}) and (\ref{s_c}) gives $s_L^{(reg)}=1$ while all the other $s_k^{(reg)}=0$ and $s_k^{(cl)}=0$. Then (\ref{pcind_cl}) yields $p_{c,ind}(\cdot,A)=1$ and $p_{m,ind}(\cdot,F)=0$. Next, (\ref{e_k_n}) and (\ref{e_k_c}) imply $e_k^{(reg)}=a_k$ and $e_k^{(cl)}=\gamma a_k$, meaning in particular, $e_L^{(reg)}=1$ and $e_0^{(cl)}=0$. These values and the putative equilibrium values of the phase variables make all the right-hand sides of (\ref{caseL}) equal to zero. Thus it is indeed an equilibrium. Similarly, it is directly verified that at the points of equilibria $p_c=1$.
\end{proof}

\begin{figure}[h]
	\centering
	\includegraphics[scale=0.6]{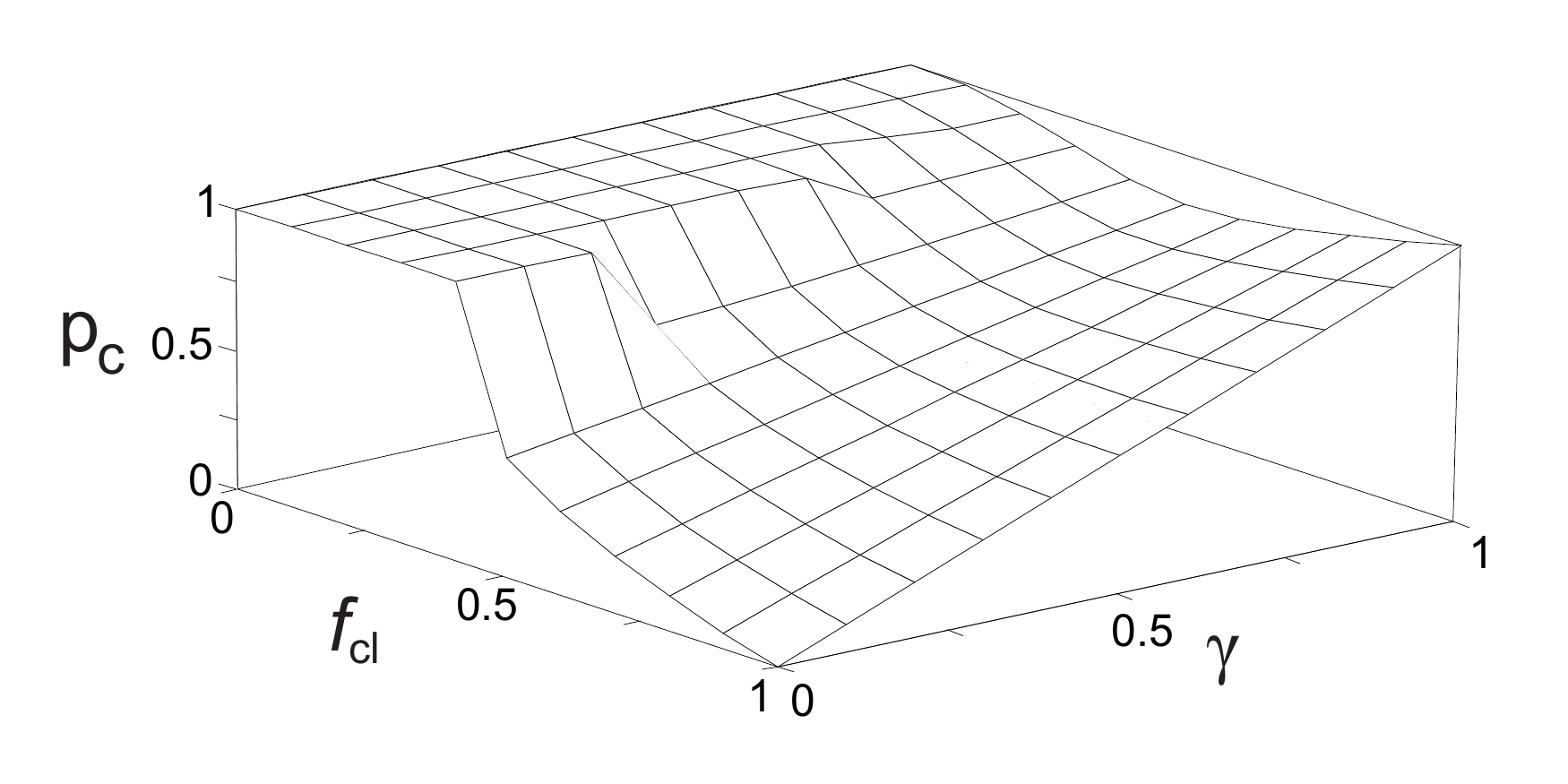}
	\caption{Probability of correct document evaluation $p_c$ in the model of a community with a clique as a function of the fraction of clique members in the community, $f_{cl}$, and the factor, decreasing the probability of submitting authentic documents by clique members, $\gamma$. At $t=0$, $R_6=1-f_{cl}$, $Q_6=f_{cl}$, while all the other $R_k=0$ and $Q_k=0$. Documents are evaluated correctly, $p_c=1$, only when the fraction of the clique $f_{cl}$ is small. $p_{\lambda}=0.01$, $\alpha=\sigma=0$.}
	\label{fig6}
\end{figure}

Figure \ref{fig6} shows the dependency of the probability of correct document evaluation $p_c$ on various $\gamma$ and $f_{cl}$ for $p_{\lambda}=0.01$, $\alpha=\sigma=0$, and $R_6(0)=1-f_{cl}$, $Q_6(0)=f_{cl}$, while all the other $R_k(0)$ and $Q_k(0)$ are equal to zero. The figure represents the system behavior at time $t=200$, when the system is close to equilibrium.

Consider two extreme cases shown in the figure, when there is no clique, $f_{cl}=0$, and when there are no regular members, $f_{cl}=1$. In the absence of a clique the equations for a community with a clique (\ref{caseL_1cl}) turn into the equations for a community without a clique (\ref{caseL}). The value of $\gamma$ becomes irrelevant, and $p_c=1$, as in Fig. \ref{fig_reps10_T100_Pc_v2.pdf}. In the absence of regular members the dynamics of the system is very simple. All documents, submitted by clique members, are evaluated by clique members only  and therefore are judged as authentic. The reputations of all the members only increase. Eventually, all the members have the maximal reputation, $r_L=1$. At this point, for all values of other parameters, $Prob(c,A)=\gamma$, $p_c(q,A)=1$, $p_m(q,F)=1$, and therefore $p_c=\gamma$ (Fig.\ref{fig6}). 

Simulations show that the system sets to the states with perfect evaluation, $p_c=1$, whenever the clique size $f_{cl}$ does not exceed 20\% of the whole community and $\gamma \leq 0.7$. When $\alpha$ and $\sigma$ both increase, $p_c=1$ for greater clique sizes (not shown).  

In the simulations above, the proportion $p_{\lambda}$ of documents, submitted by regular members and  related to the clique, was small, $p_{\lambda}=0.01$. The equilibria with $p_c=1$ from proposition \ref{propo_1cl_equil} should be independent of $p_{\lambda}$ according to the proposition's proof. Figure \ref{fig_pcavg_alpha_0_sigma_0_gamma_05_plambda.pdf} shows this is indeed the case for $\alpha=\sigma=0$, and $\gamma = 0.5$. It shows also that the states with the probability of correct document evaluation $p_c$ less than one depend on $p_{\lambda}$ only a little.

\begin{figure}[!th]
	\centering
	\includegraphics[scale=0.75]{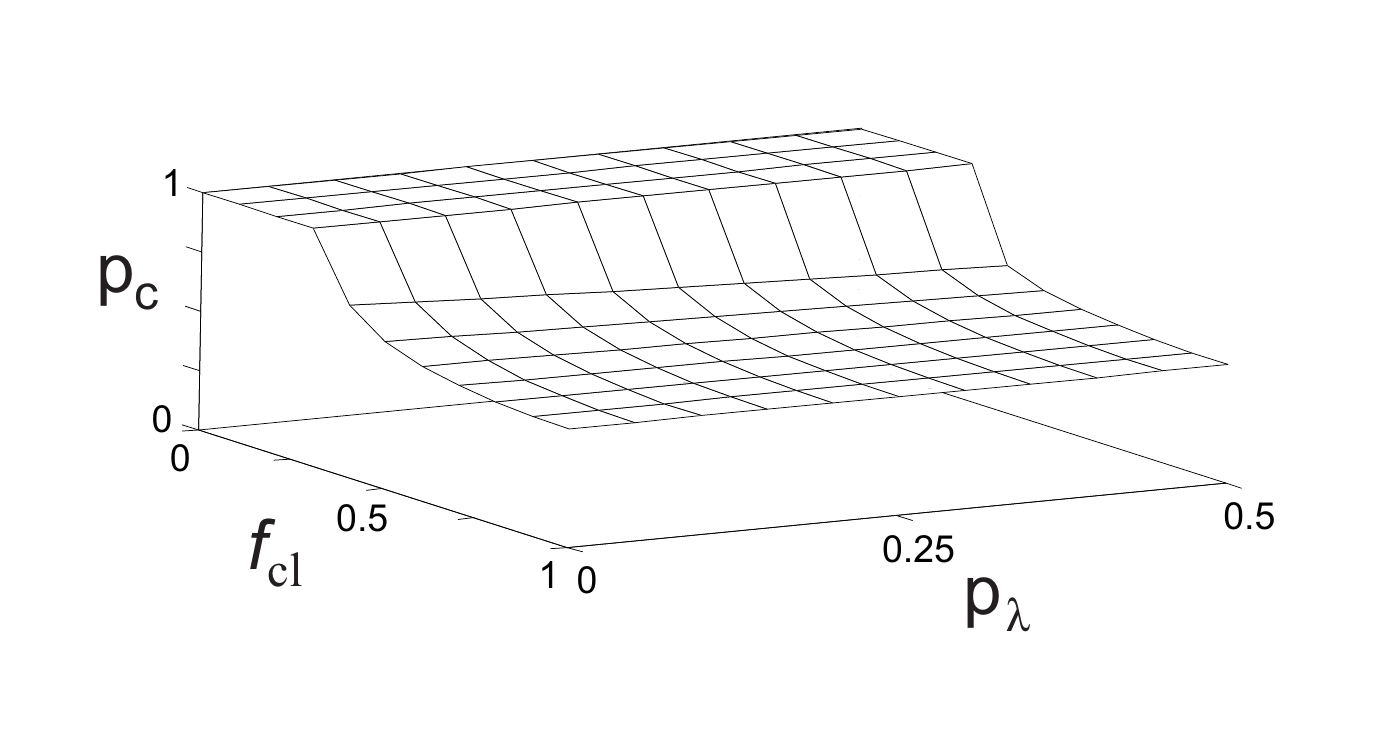}
	\caption{Probability of correct document evaluation $p_c$ in the equilibrium states weakly depends on the proportion $p_{\lambda}$ of documents, related to the clique's agenda; $p_c=1$ when the fraction of clique members in the community, $f_{cl}$, is less than 20\%. At $t=0$, $R_6=1-f_{cl}$, $Q_6=f_{cl}$, while all the other $R_k=0$ and $Q_k=0$. $\alpha=\sigma=0$, $\gamma = 0.5$.}
	\label{fig_pcavg_alpha_0_sigma_0_gamma_05_plambda.pdf}
\end{figure}

\CAAnewsection{6. Two antagonistic cliques}
\label{Two antagonistic cliques}
Cliques often form antagonistic pairs. In such a pair, the agendas of a clique and an `anti--clique' are diametrically opposite. As in the model with one clique, three types of documents are considered: generic, `g', supporting the clique's views, `q', and contradicting the clique's views, `\=q'. The antagonism of the cliques determines the probabilities of submission and correct evaluation of documents (Tables \ref{2cl_subm} and \ref{2cl_eval}). All the parameters in the tables play the same roles as in the model with one clique.

Table \ref{2cl_subm} defines the probabilities of submitting authentic and fake documents by regular,  clique, and anti-clique members. For example, according to the first line, regular members with a reputation $r_k$ submit authentic generic documents with the probability $(1-2p_{\lambda}) a_k$ while members of the cliques never submit such documents. The second line shows that the probability that regular members with a reputation $r_k$ submit fake generic documents is equal to $(1-2p_{\lambda})(1-a_k)$. Cliques members do not submit fake generic documents, and so on.

Table \ref{2cl_eval} defines the probabilities of correct evaluation of particular documents by regular,  clique, and anti-clique members. The probabilities $c_k$ of correct evaluation of documents by members with reputations $r_k$ are defined by (\ref{a_c_k}). As the table shows, it is assumed that the members of both cliques, when selected to be evaluators, don't care about generic documents that are not related to their agenda; they evaluate those documents by flipping a coin. All the documents that support the position of one clique the members of the antagonistic clique  evaluate as fake. All the documents that contradict the position of one clique the members of the antagonistic clique  evaluate as authentic.

Let $U_k$, $k=0,\cdots,L$, be the proportions of the members of the anti-clique with reputations $r_k$. The equations for $U_k$ have the same form as for  regular members and  members of the clique (see \ref{caseL}) since the rules of the community are the same for all the members.

\begin{table}
	\vspace{\baselineskip}
	\caption{Probabilities of {\emph{submitting}} various types of documents by regular, clique, and anti-clique members with the reputations $r_k$, $k=0,\cdots,L$. }  
	\vspace{\baselineskip}
	\begin{center}
		\begin{tabular}{l *{3}{r}}
			\toprule
			Document type & Regular members & Clique members & Anti-clique members\\
			\bottomrule
			Generic, authentic & $(1-2p_{\lambda}) a_k$ & $0$ & $0$ \\
			\midrule
			Generic, fake & $(1-2p_{\lambda})(1-a_k)$ &  $0$ &  $0$\\
			\midrule
			Clique, authentic & $p_{\lambda}a_k$ & $\gamma a_k$ & $0$\\
			\midrule
			Clique, fake & $p_{\lambda}(1-a_k)$ & $1-\gamma a_k$ & $0$\\
			\midrule
			Anti-clique, authentic & $p_{\lambda}a_k$ & $0$ & $\gamma a_k$\\
			\midrule
			Anti-clique, fake & $p_{\lambda}(1-a_k)$ & $0$  & $1-\gamma a_k$\\
			\bottomrule
		\end{tabular}
	\end{center}
	\label{2cl_subm}
\end{table}
\vspace{\baselineskip}

\begin{table}
	\vspace{\baselineskip}
	\caption{Probabilities of correct {\emph{evaluation}} of various types of documents by regular, clique, and anti-clique members with reputation $r_k$, $k=0,\cdots,L$. }  
	\vspace{\baselineskip}
	\begin{center}
		\begin{tabular}{l *{3}{r}}
			\toprule
			Document type & Regular members & Clique members & Anti-clique members\\
			\bottomrule
			Generic, authentic & $c_k$ & $0.5$ & $0.5$\\
			\midrule
			Generic, fake & $c_k$ &  $0.5$  & $0.5$\\
			\midrule
			Clique, authentic & $c_k$ & $1$ & $0$ \\
			\midrule
			Clique, fake & $c_k$& $0$  & $1$ \\
			\midrule
			Anti-clique, authentic & $c_k$ & $0$ & $1$\\
			\midrule
			Anti-clique, fake & $c_k$ & $1$ & $0$ \\
			\bottomrule
		\end{tabular}
	\end{center}
	\label{2cl_eval}
\end{table}

\begin{equation}
\label{caseL_2cl}
\begin{split}
\frac{dR_{0}}{dt} & = -R_{0}e_0^{(reg)}+R_{1}(1-e_1^{(reg)})\\
& \vdots \\
\frac{dR_{k}}{dt} & = R_{k-1}e_{k-1}^{(reg)}-R_{k}+R_{k+1}(1-e_{k+1}^{(reg)}), \qquad k =1,\cdots,L-1,\\
& \vdots \\
\frac{dR_{L}}{dt} & = R_{L-1}e_{L-1}^{(reg)}-R_{L}(1-e_{L}^{(reg)})\\
\vspace{0.1in}
\frac{dQ_{0}}{dt} & = -Q_{0}e_0^{(cl)}+Q_{1}(1-e_1^{(cl)})\\
& \vdots \\
\frac{dQ_{k}}{dt} & = Q_{k-1}e_{k-1}^{(cl)}-Q_{k}+Q_{k+1}(1-e_{k+1}^{(cl)}), \qquad k =1,\cdots,L-1,\\
& \vdots \\
\frac{dQ_{L}}{dt} & = Q_{L-1}e_{L-1}-Q_{L}^{(1)}(1-e_{L}^{(cl)})\\
\vspace{0.1in}
\frac{dU_{0}}{dt} & = -U_{0}e_0^{(a\textrm{-}cl)}+U_{1}(1-e_1^{(a\textrm{-}cl)})\\
& \vdots \\
\frac{dU_{k}}{dt} & = U_{k-1}e_{k-1}^{(a\textrm{-}cl)}-U_{k}+U_{k+1}(1-e_{k+1}^{(a\textrm{-}cl}), \qquad k =1,\cdots,L-1,\\
& \vdots \\
\frac{dU_{L}}{dt} & = U_{L-1}e_{L-1}^{(a\textrm{-}cl)}-U_{L}(1-e_{L}^{(a\textrm{-}cl)}).
\end{split}
\end{equation} 

In (\ref{caseL_2cl}), $e_k^{(reg)}$ and $e_k^{(cl)}$ have the same meaning as in (\ref{e_k_n}-\ref{e_k_c}). Similarly, $e_k^{(a\textrm{-}cl)}$, $0\leq k \leq L$, stand for the probability that the documents, submitted by anti-clique members with a reputation $r_k$, are evaluated as authentic. It is readily verified that the sum of all $U_k$ does not change over time. We set the sum to be equal to $f_{a\textrm{-}cl}$. The sum of all $Q_k$ does not change with time as well. This sum is set to $f_{cl}$. Finally, the sum of all $R_k$ is always equal to $1-f_{cl}-f_{a\textrm{-}cl}$.

System (\ref{caseL_2cl}) has equilibria similar to those of system (\ref{caseL_1cl}).

\begin{proposition} 
	\label{propo_2cl_equil}
	System (\ref{caseL}) has equilibria at which $R_L=1-f_{cl}-f_{a\textrm{-}cl}-R_0$, $R_0 \in [0, 1-f_{cl}-f_{a\textrm{-}cl}]$, $Q_0=f_{cl}$, $U_0=f_{a\textrm{-}cl}$, and all the other phase variables are equal to zero. At the equilibria $p_c=1$. 
\end{proposition}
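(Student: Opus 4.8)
The plan is to mimic the proof of Proposition \ref{propo_1cl_equil} verbatim, now with three blocks of phase variables $R_k,Q_k,U_k$ instead of two. First I substitute the proposed equilibrium values into the selection probabilities (the two-clique analogues of (\ref{s_n})--(\ref{s_c})). Since $r_0=0$, every member of zero reputation drops out of the common denominator $\sum_i r_i(R_i+Q_i+U_i)$, which therefore equals $r_LR_L=R_L$; only $R_L$ among the regular members has positive reputation, so $s_L^{(reg)}=1$ while every other $s_k^{(reg)}$, every $s_k^{(cl)}$ and every $s_k^{(a\textrm{-}cl)}$ is zero. (As in the one-clique case, the single degenerate point $R_0=1-f_{cl}-f_{a\textrm{-}cl}$, where $R_L=0$ and the denominator vanishes, is dealt with by the removable-discontinuity convention introduced after (\ref{s_k}).)

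Second, I propagate these values through the chain of definitions. All reviewers are now regular members of reputation $r_L=1$, and $c_L=c(1)=1$; hence the two-clique analogue of (\ref{pcind_cl}) gives $p_{c,ind}(\cdot,A)=1$ and $p_{m,ind}(\cdot,F)=0$ for each document type $g,q,\bar q$, and the cubic majority-vote formula then gives $p_c(\cdot,A)=1$, $p_m(\cdot,F)=0$. Feeding this into the analogues of (\ref{e_k_n})--(\ref{e_k_c}) makes all the ``fake'' terms vanish and leaves $e_k^{(reg)}=a_k$ and $e_k^{(cl)}=e_k^{(a\textrm{-}cl)}=\gamma a_k$; in particular $e_L^{(reg)}=a(1)=1$ and $e_0^{(cl)}=e_0^{(a\textrm{-}cl)}=\gamma a(0)=0$.

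Third, I check that with these $e$-values the proposed distribution makes every right-hand side of (\ref{caseL_2cl}) vanish. In the $R$-block the $R_0$ equation reduces to $-R_0 e_0^{(reg)}=-R_0\cdot 0$ (using $R_1=0$), the $R_L$ equation to $-R_L(1-e_L^{(reg)})=-R_L\cdot 0$ (using $R_{L-1}=0$), and each intermediate equation is a sum of terms every one of which contains a factor that is zero ($R_k$ for $1\le k\le L-1$ in the interior, or the coefficients $e_0^{(reg)}=0$ and $1-e_L^{(reg)}=0$ at the two ends of the chain). The $Q$-block vanishes identically the same way --- its first equation using $e_0^{(cl)}=0$, its last equation because $Q_L=0$ --- and so does the $U$-block using $e_0^{(a\textrm{-}cl)}=0$. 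Hence the stated points are equilibria.

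Finally, for $p_c=1$ I use the two-clique analogue of (\ref{pc_avg_cl})--(\ref{prob_doc_cl}) (with anti-clique documents now submitted by both regular and anti-clique members). Since every $p_c(\cdot,A)=1$ and every $p_m(\cdot,F)=0$, the expression collapses to $\sum_{\text{type}}\big(Prob(\text{type},A)+Prob(\text{type},F)\big)$; summing the rows of the two-clique version of (\ref{prob_doc_cl}) one gets $(1-f_{cl}-f_{a\textrm{-}cl})+f_{cl}+f_{a\textrm{-}cl}=1$. None of this presents a genuine obstacle: the whole argument is a routine extension of Proposition \ref{propo_1cl_equil}, and the only point calling for a word of care is the degenerate endpoint $R_L=0$, where the $0/0$ selection probabilities must be handled by the same convention already used for the earlier models.
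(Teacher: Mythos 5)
Your proof is correct and follows exactly the route the paper intends: the paper omits the proof of this proposition as a routine analogue of Proposition \ref{propo_1cl_equil}, and your argument is precisely that analogue carried out in full (substitute the equilibrium values into the selection probabilities, propagate $s_L^{(reg)}=1$ through the definitions to get $e_k^{(reg)}=a_k$ and $e_k^{(cl)}=e_k^{(a\textrm{-}cl)}=\gamma a_k$, and check that all right-hand sides of (\ref{caseL_2cl}) vanish and that $p_c=1$). Your added care about the degenerate endpoint where $R_L=0$ is a sensible refinement not spelled out in the paper.
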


In accord with Table \ref{2cl_subm}, and similar to the definition of $e_k^{(cl)}$, the probability that an (anti-clique) document, submitted by a member of the anti-clique is evaluated as authentic, is given by the formula 

\begin{equation}
\label{e_k_q}
e_k^{(a\textrm{-}cl)} = \gamma a_k p_c(\bar{q},A) + (1-\gamma a_k) p_m(\bar{q},F),  \qquad k=0,\cdots,L.
\end{equation}

The probabilities of correct evaluation, $p_c$, and false evaluation, $p_m$,  in (\ref{e_k_n}), (\ref{e_k_c}), and (\ref{e_k_q})  depend on the corresponding probabilities for individual evaluators ( cf.(\ref{p_c})). For example,

$$ p_c(g,A)= p_{c,ind}^2(g,A)(3-p_{c,ind}(g,A)). $$

In accord with Table \ref{2cl_eval}, 

\begin{equation}
\label{pcind_2cl} 
\begin{split}
p_{c,ind}(g,A) & = \sum_{k=0}^Ls_k^{(reg)} c_k+0.5\sum_{k=0}^L(s_k^{(cl)}+s_k^{(a\textrm{-}cl)}), \\
p_{m,ind}(g,F) & =\sum_{k=0}^Ls_k^{(reg)} (1-c_k)+0.5\sum_{k=0}^L(s_k^{(cl)}+s_k^{(a\textrm{-}cl)}), \\
p_{c,ind}(q,A) & = \sum_{k=0}^Ls_k^{(reg)} c_k+ \sum_{k=0}^Ls_k^{(cl)}, \\
p_{m,ind} (q,F) & = \sum_{k=0}^L s_k^{(reg)} (1-c_k)+\sum_{k=0}^Ls_k^{(cl)}, \\
p_{c,ind}(\bar{q},A) & = \sum_{k=0}^Ls_k^{(reg)} c_k+\sum_{k=0}^Ls_k^{(a\textrm{-}cl)}, \\
p_{m,ind}(\bar{q},F) & = \sum_{k=0}^Ls_k^{(reg)} (1-c_k)+\sum_{k=0}^Ls_k^{(a\textrm{-}cl)}.
\end{split}
\end{equation}

\noindent
Here, $c_k$ are defined as in (\ref{a_c_k}). 

The probability  of selecting a regular member with a reputation $r_k$ is  

\begin{equation}
\label{s_n_2cl}
s_k^{(reg)}= \frac{r_kR_k}{\sum_{i=0}^Lr_i (R_i+Q_{i}+U_{i})},
\end{equation}

\noindent
and, similarly, the probability  of selecting a clique member with a reputation $r_k$ is

\begin{equation}
\label{s_c_2cl}
s_k^{(cl)}= \frac{r_kQ_k}{\sum_{i=0}^Lr_i (R_i+Q_{i}+U_{i})}, 
\end{equation}

\noindent
and the probability  of selecting an anti-clique member with a reputation $r_k$
\begin{equation}
\label{s_q_2cl}
s_k^{(a\textrm{-}cl)}= \frac{r_kU_k}{\sum_{i=0}^Lr_i (R_i+Q_{i}+U_{i})}. 
\end{equation}

The formula for $p_c$ is the same as in the case of one clique (\ref{pc_avg_cl}). However, some of the terms change:  

\begin{equation}
\label{prob_doc_2cl}
\begin{split}
Prob(g,A) & = (1-2p_{\lambda})\sum_{k=0}^LR_k a_k,\\
Prob(g,F) & = (1-2p_{\lambda}) \sum_{k=0}^LR_k (1-a_k),\\
Prob(q,A) & = p_{\lambda}\sum_{k=0}^LR_k a_k + \sum_{k=0}^LQ_k \gamma a_k,\\
Prob(q,F) & = p_{\lambda}\sum_{k=0}^LR_k (1-a_k) + \sum_{k=0}^LQ_k (1-\gamma a_k),\\
Prob(\bar{q},A) & = p_{\lambda}\sum_{k=0}^LR_ka_k + \sum_{k=0}^LU_k \gamma a_k, \\
Prob(\bar{q},F) & = p_{\lambda}\sum_{k=0}^LR_k (1-a_k)+ \sum_{k=0}^LU_k (1-\gamma a_k).
\end{split}
\end{equation}

In summary, the system with two antagonistic cliques has the following parameters: the relative size of the first clique, $f_{cl}$,  the relative size of the antagonistic clique, $f_{a\textrm{-}cl}$,  the proportion of documents matching the clique's agenda, $p_{\lambda}$, and  the factor $\gamma$, $0 \leq \gamma \leq 1$, that decreases the probabilities of submitting authentic documents by clique and anti-clique members. For simplicity in what follows it is assumed that both cliques have the same size: $f_{cl}=f_{a\textrm{-}cl}$, $f_{cl}\leq 0.5$. When $f_{cl}=0.5$ there are no regular members in the community, $1-2f_{cl}=0$.

The first experiment was to explore whether the presence of an anti-clique improves the quality of document evaluation. The experiment shows that the improvement does take place but is not large. 
Figure \ref{fig_2cl_pc1_0_01_alpha_0_sigma_0.pdf} shows a typical behavior of the probability of correct document evaluation $p_c$ for various values of $f_{cl}$ and $\gamma$ at $t=200$ when the system is close to equilibrium. In the case, shown in the figure, $L=10$, $\alpha=\sigma=0$, at $t=0$, $R_6=0.4$, $Q_6=0.3$, and $U_6=0.3$ and all the other $R_k$, $Q_k$, $U_k$ were equal to zero. Comparison with the behavior of the community with one clique (Fig.\ref{fig6}) shows that the region of the parameters $f_{cl}$ and $\gamma$, for which $p_c=1$, in the case of two cliques is greater.

\begin{figure}[!th]
	\centering
	\includegraphics[scale=0.6]{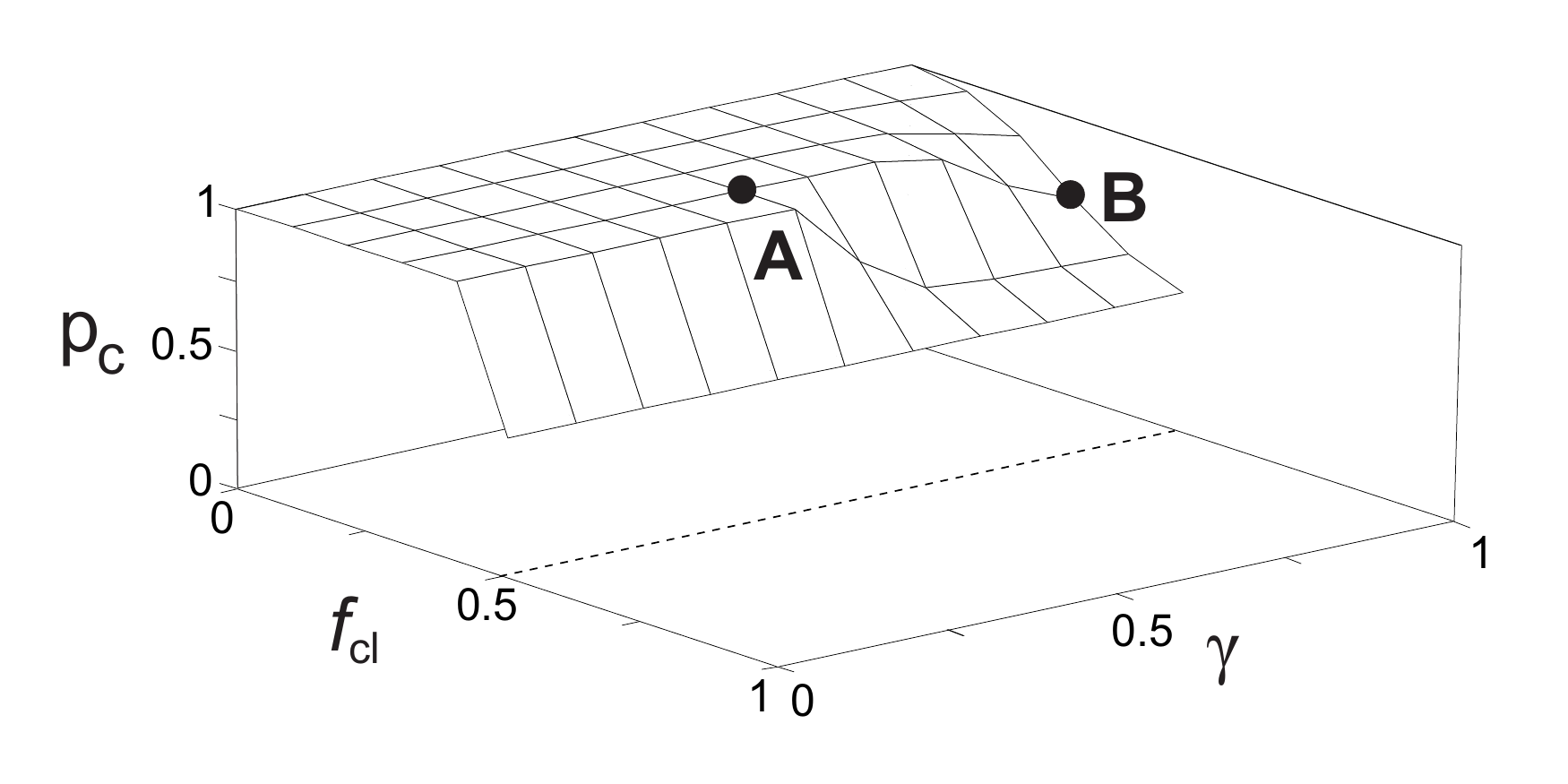} 
	\caption{Probability of correct document evaluation $p_c$ in a system with two cliques for various values of the fraction of clique members in the community, $f_{cl}$, and the factor, decreasing the probability of submitting authentic documents by clique members, $\gamma$. At $t=0$, $R_6=0.4$, $Q_6=0.3$, and $U_6=0.3$, while all the other components $R_k$, $Q_k$, and $U_k$ are equal to zero. $\alpha=\sigma=0$.  Details of the model behavior at points A and B are described in the text.}
	\label{fig_2cl_pc1_0_01_alpha_0_sigma_0.pdf}
\end{figure}

In accord with Proposition \ref{propo_2cl_equil}, in the states with $p_c=1$, such as at point A of  Fig.\ref{fig_2cl_pc1_0_01_alpha_0_sigma_0.pdf}, we have $R_{10}=1-2f_{cl}$, $Q_0=f_{cl}$, $U_0=f_{cl}$.  The system pushes regular agents into just two categories, with zero and 100\% reputation. All clique members end up with zero reputation.

When $f_{cl}=0.3$ and $\gamma = 1$ (Fig.\ref{fig_2cl_pc1_0_01_alpha_0_sigma_0.pdf}, point B) the evolution of the system is different. In the corresponding steady state there are clique members with non-zero and even 100\% reputation. These members can be selected to serve as evaluators. Because of that there are many mistakes in document evaluation, $p_c \approx 0.7$.

\CAAnewsection{7. Discussion}
\label{Related work}
Most of the related work is done in the context of academic peer-reviewing and e-commerce.

Academic peer-review models are often considerably more complex compared to the presented model which makes direct comparison with our model challenging. They include academic-specific factors such as the resources available to researchers \cite{CGS14}, reputation of a journal \cite{KT16}, impact of rational referees, who might not have incentives to see high quality publications other than their own \cite{TH11}, and so on. Nevertheless, these studies have ideas that could be applied to the model considered. In particular in \cite{H12}, documents are evaluated by highly qualified `experts', who make few mistakes, and less qualified `readers', who make more mistakes. It has been shown that the evaluations by large numbers of readers (up to 100) have better accuracy compared to the evaluations obtained by several experts. The presented model can be generalized to include not three but any number of evaluators to exploit this mechanism.   

Peer-to-peer interactions and associated reputation and trust models is a field that has a lot of interest because of online commerce (e.g. eBay and Amazon) \cite{XL03,KSH03,MH02,HBC14,RF00,ZM00}, sharing economy (e.g. Uber, Lyft, AirBnB, Homeaway) \cite{Q17}, blockchain networks \cite{TL18}, and sharing resources and information \cite{CB13,CH16}. The computational models of reputation and trust vary in how individuals interact and use the results of interactions. Most of this work is based on agent-based simulations. Theoretical results are rare. Our approach is most closely related to EigenTrust reputation management system in peer-to-peer networks \cite{KSH03}. In EigenTrust, peers accumulate global trust values from peer-to-peer interactions. Greater global trust values increase the chances of peers to evaluate others. In our model, not individuals but subsets of members who have the same value of reputation are considered. It makes the model more tractable analytically and computationally. 

Mathematically, our model belongs to the class of compartmental models (see review \cite{BG12}). Such models have been used in numerous contexts, from epidemiology \cite{H00} to the transport of pollutants in ecosystems\cite{G91}. The system in \cite{BS05} is particularly related to the present study. It describes interactions between altruists and defectors. As in the model considered here, the model in \cite{BS05} has a continuum of stable equilibrium states that are not asymptotically stable.

\CAAnewsection{8. Conclusions and future work}
\label{Conclusions and future work}
 
We describe a model of a network community, in which fake and authentic  documents, submitted by the community members, can be effectively evaluated by peers. The mechanism of evaluation is based on the intuition that judgment should be delegated to skilled and reputable evaluators. Similar mechanisms have been considered earlier \cite{KSH03}, but as far as we know not in population models. The population model approach allowed us to explore possible dynamics of the community assuming large number of its members. In traditional agent-based simulations the size of model communities is limited.    

The proposed model is flexible and can be developed in multiple ways.  Some parameters should be refined on the basis of sociological studies. In particular here we set the probability of submitting authentic documents, $a(r)$, and the probability of correct evaluation of documents, $c(r)$, to be generic quadratic functions of the agent's reputation $r$. Further examination should determine for what values of the parameters $\alpha$ and $\sigma$, functions $a(r)$ and $c(r)$ approximate data in the best way. Second, it could be that some clique members are not totally preoccupied with the clique agenda, as assumed in the present model, but also submit and correctly evaluate generic documents. To what extent would this impact the model?

Other parameters of the model can be chosen by developers of the system.  The presented results show that within a certain range of such parameters the model has the desired behavior. Future theoretical analyses, including global sensitivity analysis \cite{MK08}, may improve such choices. Analyses may show that performance of the model gets better if more than three  evaluators evaluate every document, reveal the optimal number and distribution of reputation levels (potentially considering continuous distribution of reputations and partial differential equations), determine that the system functions better if earning reputation is harder than losing it, etc.

In the present study we model a closed system; similar systems are often considered in chemical kinetics. Adding new community members with certain rate will transform the system into an open system. It would be interesting to explore the changes of the mathematical properties of the system under this transformation.

Can a community modeled in the present study function in the real world?
It depends on the extent to which the model assumptions are correct.
Two assumptions can be met relatively easy. 
One is that all the documents are verifiable. 
For example, if the description says that the photo depicts an event happening at some place at some time then the image should have the corresponding EXIF-encoded GPS coordinates and time. GPS-connected smartphones and cameras get this information automatically. In a real-world community, an authentic document is also expected to be novel and not a copy or a slightly changed version of an already posted document. Current technologies allow to meet these requirements as well. 

The second assumption is that the community members, selected to serve as evaluators, should really do it and do it quickly. This is a matter of the community discipline. Many journals demonstrate such a discipline is possible to maintain.

\CAAnewsection{ACKNOWLEDGMENTS}
The author is grateful to L.~Ritter and anonymous reviewer for valuable comments. The idea of a network community with a decentralized evaluation mechanism to filter fake information is joint with E.~Belyakov. It is part of our  project of an Internet resource for sharing, collecting and organizing event-related media information.

\end{document}